\newtheorem{theorem}{Theorem}
\newtheorem{proposition}[theorem]{Proposition}
\newtheorem{lemma}[theorem]{Lemma}
\newtheorem{corollary}[theorem]{Corollary}
\newtheorem{remark}[theorem]{Remark}
\newtheorem{remarks}[theorem]{Remarks}
\newenvironment{proof}[1][Proof]{\noindent\textbf{#1.} }{\ \rule{0.5em}{0.5em}}
\begin{document}




\title{Analyzing a Seneta's conjecture by using the Williamson transform}


\author{Edward Omey and Meitner Cadena}




\maketitle

\begin{abstract}
Considering slowly varying functions (SVF), 
Seneta in 2019 conjectured the following implication, for $\alpha\geq1$,

$$
\int_0^x y^{\alpha-1}(1-F(y))dy\textrm{\ is SVF}\ \Longrightarrow\ \int_{[0,x]}y^{\alpha}dF(y)\textrm{\ is SVF, as $x\to\infty$,}
$$

where $F(x)$ is a cumulative distribution function on $[0,\infty)$.
By applying the Williamson transform, an extension of this conjecture is proved.
Complementary results related to this transform and particular cases of this extended conjecture are discussed.
\end{abstract}



\begin{quote}
Keywords:
regular variation, de Haan class, truncated moments, the Williamson transform


MSC code: 26A12, 60E05

\end{quote}



A function $f(x)$ is slowly varying (SVF) if, for any $t>0$, $f(tx)\big/f(x)\to1$ as $x\to\infty$.
If $F(x)$ is a distribution function and $\overline{F}(x)=1-F(x)$ is its tail,
recently Seneta \cite{Seneta} 
conjectured the following.
Given $\alpha\geq1$,

\begin{equation}\label{conjectureSeneta}
\int_0^x y^{\alpha-1}\overline{F}(y)dy\ \textrm{is SVF}\quad\Longrightarrow\quad
\int_{[0,x]} y^{\alpha}dF(y)\ \textrm{is SVF}\textrm{, as $x\to\infty$.}
\end{equation}

Nowadays, 
Kevei in \cite{Kevei} 
presented an extension of such a conjecture and its proof.
Coincidentally, we have been working on such a subject as a part of other researches related to results shown by 
\cite{Seneta},
see 
\cite{OmeyCa} and 
\cite{OmeyCb}.
Unlike Kevei's proofs, ours are based on mainly the Williamson transform.
Also, the application of this transform to $F$ allows the formulation of another conjecture more.

In the following section, we present our main results.
Previously, we start introducing notation and preliminary results.

\section{Main results}

\subsection{Some notation and transforms}

In what follows, $F(x)$ denotes a distribution function (d.f.) defined on $[0,\infty)$ with $F(0)=0$, and $\overline{F}(x)=1-F(x)$ denotes its tail.
The $\alpha$--th moment of $F(x)$ is denoted by $m(\alpha)$.
It is said that $f(x)\sim g(x)$ if $f(x)\big/ g(x)\to 1$ as $x\to\infty$.
The case $f(x)\sim 0$ will be understood as $f(x)\to 0$ as $x\to\infty$.
The class of regularly varying functions with index $\alpha$, denoted by $RV_{\alpha}$, consists of functions $f(x)$ satisfying, for any $t>0$, $f(tx)\sim t^{\alpha}f(x)$.
If the index $\alpha=0$, $f(x)$ is said slowly varying.
If $L\in RV_{0}$, then the de Haan class, denoted by $\Pi_{\beta}(L)$, consists of functions $f(x)$ satisfying, for any $t>1$, $(f(tx)-f(x))\big/L(x)\sim\beta\log t$.


Let $\alpha >0$. 

We define the following
transformations:%
$$
H_{\alpha }(x)=\int_{0}^{x}y^{\alpha }dF(y)\textrm{,}  
$$
and%
$$
W_{\alpha }(x)=\int_{0}^{x}y^{\alpha -1}\overline{F}(y)dy\textrm{.}  
$$
Note that Lebesgue's
theorem on dominated convergence shows that $\lim_{x\rightarrow \infty
}x^{-\alpha }H_{\alpha }(x)=0$. 
Hence, we have%
\begin{equation}
H_{\alpha }(x)=\alpha \int_{0}^{x}y^{\alpha -1}\overline{F}(y)dy-x^{\alpha }%
\overline{F}(x)\textrm{.}  \label{3}
\end{equation}%

Then, we 
get the following result.

\begin{proposition}\label{prop1}
We have, for $x\geq 0$,
\begin{equation}
\overline{F}(x)=\alpha \int_{x}^{\infty }z^{-\alpha -1}H_{\alpha
}(z)dz-x^{-\alpha }H_{\alpha }(x)\textrm{.}  \label{5}
\end{equation}
\end{proposition}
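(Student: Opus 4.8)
The plan is to read \eqref{5} as the inversion of the identity defining $H_\alpha$, and to prove it by expanding $H_\alpha$ inside the right-hand side and interchanging the order of integration. Fix $x\ge 0$. Writing $H_\alpha(z)=\int_{[0,z]}y^\alpha\,dF(y)$ and noting that the integrand $z^{-\alpha-1}y^\alpha$ is nonnegative on $\{(z,y):z\ge x,\ 0\le y\le z\}$, Tonelli's theorem lets me swap the two integrals:
\[
\alpha\int_x^\infty z^{-\alpha-1}H_\alpha(z)\,dz=\int_{[0,\infty)}y^\alpha\left(\alpha\int_{\max(x,y)}^\infty z^{-\alpha-1}\,dz\right)dF(y)=\int_{[0,\infty)}y^\alpha\max(x,y)^{-\alpha}\,dF(y).
\]

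The next step I would take is to split the last integral over $\{y\le x\}$ and $\{y>x\}$. On $[0,x]$ the integrand equals $x^{-\alpha}y^\alpha$, whose integral is $x^{-\alpha}H_\alpha(x)$; on $(x,\infty)$ it equals $1$, whose integral is $\int_{(x,\infty)}dF=\overline{F}(x)$. This yields
\[
\alpha\int_x^\infty z^{-\alpha-1}H_\alpha(z)\,dz=x^{-\alpha}H_\alpha(x)+\overline{F}(x),
\]
and a rearrangement is precisely \eqref{5}. For $x=0$ the term $x^{-\alpha}H_\alpha(x)$ is to be read as its limit $0$ as $x\downarrow 0$, which is consistent since $z^{-\alpha}H_\alpha(z)\le F(z)\to 0$.

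Before rearranging I would record finiteness: since $y^\alpha\max(x,y)^{-\alpha}\le 1$, the computed integral is bounded by $\int_{[0,\infty)}dF=1$, so both $\alpha\int_x^\infty z^{-\alpha-1}H_\alpha(z)\,dz$ and $x^{-\alpha}H_\alpha(x)$ are finite for $x>0$ and the algebra is legitimate. An alternative route, avoiding Tonelli, is Stieltjes integration by parts using $dH_\alpha(z)=z^\alpha\,dF(z)$ together with the antiderivative $-z^{-\alpha}$ of $\alpha z^{-\alpha-1}$; there the boundary term at infinity vanishes exactly because of the already noted fact $\lim_{z\to\infty}z^{-\alpha}H_\alpha(z)=0$. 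I do not expect a genuine obstacle: the only point needing care is the interchange of integrals (or, in the alternative, the vanishing of the boundary term at infinity), and these are handled respectively by the nonnegativity of the integrand and by the stated limit.
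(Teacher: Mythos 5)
Your proof is correct and is essentially the same computation as the paper's: both rest on interchanging the order of integration in $\int z^{-\alpha-1}y^{\alpha}$. The only cosmetic difference is that the paper truncates at a finite upper limit $a$, decomposes $H_{\alpha}(z)=H_{\alpha}(x)+\int_{x}^{z}y^{\alpha}dF(y)$, and then lets $a\to\infty$ using $a^{-\alpha}H_{\alpha}(a)\to 0$, whereas you apply Tonelli directly on the unbounded region, which dispenses with that limiting step.
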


\begin{proof}
Let us consider the integral $\int_{x}^{a}z^{-\alpha -1}H_{\alpha }(z)dz$ for $%
a\geq x\geq 0$. We have, by using (\ref{3}), %
\begin{eqnarray*}
\lefteqn{\int_{x}^{a}z^{-\alpha -1}H_{\alpha }(z)dz} \\
 & = & \int_{x}^{a}z^{-\alpha
-1}\left(H_{\alpha }(x)+\int_{x}^{z}y^{\alpha }dF(y)\right)dz \\
&=&\frac{1}{\alpha }(x^{-\alpha }-a^{-\alpha })H_{\alpha
}(x)+\int_{x}^{a}\int_{y}^{a}z^{-\alpha -1}y^{\alpha }dzdF(y) \\
&=&\frac{1}{\alpha }(x^{-\alpha }-a^{-\alpha })H_{\alpha }(x)+\frac{1}{%
\alpha }\int_{x}^{a}(y^{-\alpha }-a^{-\alpha })y^{\alpha }dF(y) \\
&=&\frac{1}{\alpha }(x^{-\alpha }-a^{-\alpha })H_{\alpha }(x)+\frac{1}{%
\alpha }(F(a)-F(x))-\frac{1}{\alpha }a^{-\alpha }(H_{\alpha }(a)-H_{\alpha
}(x)) \\
&=&\frac{1}{\alpha }x^{-\alpha }H_{\alpha }(x)+\frac{1}{\alpha }(F(a)-F(x))-%
\frac{1}{\alpha }a^{-\alpha }H_{\alpha }(a)\textrm{.}
\end{eqnarray*}%
Taking limits as $a\rightarrow \infty $, we obtain that $a^{-\alpha
}H_{\alpha }(a)\rightarrow 0$ and hence also that%
$$
\int_{x}^{\infty }z^{-\alpha -1}H_{\alpha }(z)dz=\frac{1}{\alpha }x^{-\alpha
}H_{\alpha }(x)+\frac{1}{\alpha }\overline{F}(x)\textrm{.} 
$$
This proves the result.
\end{proof}


Now, we recall
the Williamson transform which is
defined by, see 
\cite{Williamson},
%
$$
G_{\alpha}(x)=\int_{0}^{x}\left(1-\left(\frac{t}{x}\right)^{\alpha }\right)dF(t)\textrm{.}
$$
Note that 
$$
G_{\alpha}(x)=\int_{0}^{\infty }P\left(Z>\frac{t}{x}\right)dF(t)=EF(Zx)=P\left(\frac{X}{Z}\leq x\right)%
\textrm{,}
$$
where $Z$ is independent of $X$ and $P(Z\leq x)=x^{\alpha },0\leq x\leq 1$.
Among others, this shows that $G_{\alpha}(x)$ is also a d.f. (with $G_{\alpha}(0)=0$).
Also, we have the following result.

\begin{proposition}\label{Newprop1}
We have, for $x\geq 0$,

\begin{itemize}
\item[(i)] $G_{\alpha}(x)=\alpha x^{-\alpha
}\int_{0}^{x}t^{\alpha -1}F(t)dt$.

\item[(ii)] $F(x)=G_{\alpha}(x)+\frac{x}{\alpha }G_{\alpha}^{\prime }(x)$.

\end{itemize}
\end{proposition}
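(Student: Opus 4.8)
The plan is to reduce everything to the elementary identity
$$
G_{\alpha}(x)=\int_{0}^{x}\left(1-\left(\frac{t}{x}\right)^{\alpha}\right)dF(t)=F(x)-x^{-\alpha}H_{\alpha}(x),
$$
valid for every fixed $x\geq 0$ (here only finiteness of $H_{\alpha}(x)$ for fixed $x$ is needed, which is immediate since $F(0)=0$). Part (i) then follows from an integration by parts applied to $H_{\alpha}$. In the Lebesgue--Stieltjes sense, since $t\mapsto t^{\alpha}$ is continuous, one has $\int_{(0,x]}t^{\alpha}dF(t)=x^{\alpha}F(x)-\alpha\int_{0}^{x}t^{\alpha-1}F(t)dt$, the boundary contribution at $0$ vanishing because $F(0)=0$ and $\alpha>0$. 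Substituting into the displayed identity gives $G_{\alpha}(x)=F(x)-x^{-\alpha}\bigl(x^{\alpha}F(x)-\alpha\int_{0}^{x}t^{\alpha-1}F(t)dt\bigr)=\alpha x^{-\alpha}\int_{0}^{x}t^{\alpha-1}F(t)dt$, which is (i). (Equivalently, one can interchange the order of integration in $\alpha x^{-\alpha}\int_{0}^{x}t^{\alpha-1}F(t)dt$ exactly as in the proof of Proposition~\ref{prop1}.)

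For (ii) I would rewrite (i) as $x^{\alpha}G_{\alpha}(x)=\alpha\int_{0}^{x}t^{\alpha-1}F(t)dt$ and differentiate. The right-hand side is the indefinite integral of the locally bounded, monotone function $t\mapsto\alpha t^{\alpha-1}F(t)$, hence absolutely continuous, with derivative $\alpha x^{\alpha-1}F(x)$ at every continuity point of $F$; in particular $G_{\alpha}$ is differentiable there. Applying the product rule to the left-hand side yields $\alpha x^{\alpha-1}G_{\alpha}(x)+x^{\alpha}G_{\alpha}^{\prime}(x)=\alpha x^{\alpha-1}F(x)$, and dividing through by $\alpha x^{\alpha-1}$ gives $F(x)=G_{\alpha}(x)+\frac{x}{\alpha}G_{\alpha}^{\prime}(x)$, as claimed.

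The only genuinely delicate point is the interpretation of $G_{\alpha}^{\prime}$: when $F$ has jumps, $G_{\alpha}$ need not be differentiable everywhere, so (ii) has to be read as an identity holding at every continuity point of $F$ (equivalently, Lebesgue almost everywhere). No real obstacle arises — it is a matter of stating the regularity correctly — but I would also take care to verify the vanishing of the boundary term at $0$ in the integration by parts and to record that $\int_{0}^{x}t^{\alpha-1}F(t)dt<\infty$ for each fixed $x$, so that all manipulations are legitimate.
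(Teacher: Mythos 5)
Your proof is correct and follows essentially the same route as the paper: part (i) via the identity $G_{\alpha}(x)=F(x)-x^{-\alpha}H_{\alpha}(x)$ plus integration by parts, and part (ii) by differentiating $x^{\alpha}G_{\alpha}(x)=\alpha\int_{0}^{x}t^{\alpha-1}F(t)dt$ and dividing by $x^{\alpha-1}/\alpha$. Your added remark that (ii) should be read at continuity points of $F$ (equivalently a.e.) is a sensible regularity caveat that the paper's one-line proof leaves implicit.
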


\begin{proof}
(i) We have, by using partial integration,

$$
G_{\alpha}(x)=F(x)-x^{-\alpha }\int_{0}^{x}t^{\alpha }dF(t)=\alpha x^{-\alpha
}\int_{0}^{x}t^{\alpha -1}F(t)dt\textrm{.}
$$

(ii)
Writing the previous result as $x^{\alpha
}G_{\alpha}(x)=\alpha \int_{0}^{x}t^{\alpha -1}F(t)dt$,
deriving this relation, and dividing it by $x^{\alpha -1}\big/\alpha$ give the result claimed.
%
\end{proof}


The following result provides relations among $\overline{F}(x)$, $\overline{G}_{\alpha}(x)$, $H_{\alpha }(x)$ and $W_{\alpha }(x)$.

\begin{proposition}\label{propg}
We have, for $x\geq 0$,

\begin{itemize}
\item[(i)] $\overline{F}(x)=\overline{G}_{\alpha}(x)-x^{-\alpha }H_{\alpha }(x)$.

\item[(ii)] $\overline{G}_{\alpha}(x)=\alpha x^{-\alpha }W_{\alpha }(x)$.

\end{itemize}
\end{proposition}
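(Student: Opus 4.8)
The plan is to obtain both identities from relations already recorded in the excerpt, so that essentially no fresh computation is needed beyond elementary rearrangement. For part (i), I would start from the identity uncovered inside the proof of Proposition~\ref{Newprop1}(i), namely $G_{\alpha}(x)=F(x)-x^{-\alpha}\int_{0}^{x}t^{\alpha}dF(t)=F(x)-x^{-\alpha}H_{\alpha}(x)$. Passing to complements gives $\overline{G}_{\alpha}(x)=1-G_{\alpha}(x)=\overline{F}(x)+x^{-\alpha}H_{\alpha}(x)$, and solving for $\overline{F}(x)$ is exactly the claim in (i).

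For part (ii), I would combine (i) with the representation of $H_{\alpha}$ in terms of $W_{\alpha}$ already available as equation~(\ref{3}), that is, $H_{\alpha}(x)=\alpha W_{\alpha}(x)-x^{\alpha}\overline{F}(x)$. Substituting this into $\overline{G}_{\alpha}(x)=\overline{F}(x)+x^{-\alpha}H_{\alpha}(x)$ from part (i), the two copies of $\overline{F}(x)$ cancel and one is left with $\overline{G}_{\alpha}(x)=\alpha x^{-\alpha}W_{\alpha}(x)$.

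As a sanity check, and as an alternative route that does not invoke (\ref{3}), one could verify (ii) directly: write $\overline{G}_{\alpha}(x)=1-G_{\alpha}(x)=x^{-\alpha}\int_{0}^{x}t^{\alpha}dF(t)+\overline{F}(x)$, then apply Fubini (or integration by parts) to $\int_{0}^{x}t^{\alpha}dF(t)$ to reintroduce $\overline{F}$ on $[0,x]$ and arrive at $\alpha x^{-\alpha}\int_{0}^{x}t^{\alpha-1}\overline{F}(t)\,dt=\alpha x^{-\alpha}W_{\alpha}(x)$. The only point deserving a little care is the treatment of the endpoint $t=x$ (and a possible atom of $F$ there) in these integration-by-parts steps, but since $F$ is a distribution function on $[0,\infty)$ this causes no trouble. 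I do not expect any genuine obstacle: the proposition is a bookkeeping consequence of Proposition~\ref{Newprop1} together with equation~(\ref{3}).
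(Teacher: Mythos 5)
Your proof is correct and follows essentially the same route as the paper: part (i) is identical (complementing $G_{\alpha}(x)=F(x)-x^{-\alpha}H_{\alpha}(x)$ from Proposition~\ref{Newprop1}(i)), and for part (ii) the paper simply writes $\overline{G}_{\alpha}(x)=1-\alpha x^{-\alpha}\int_{0}^{x}t^{\alpha-1}F(t)\,dt=\alpha x^{-\alpha}\int_{0}^{x}t^{\alpha-1}\overline{F}(t)\,dt$, which is the ``alternative route'' you sketch, while your substitution of (\ref{3}) into (i) is an equally valid one-line rearrangement of the same ingredients. No gaps.
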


\begin{proof}
(i) Using $H_{\alpha }(x)$ and 
Proposition \ref{Newprop1} (i), we have that $G_{\alpha}(x)=F(x)-x^{-\alpha }H_{\alpha
}(x)$ and $\overline{G}_{\alpha}(x)=\overline{F}(x)+x^{-\alpha }H_{\alpha }(x)$.

(ii) Using the tail of $G_{\alpha}(x)$, $W_{\alpha }(x)$, and 
Proposition \ref{Newprop1} (i), we find%
$$
\overline{G}_{\alpha}(x)=\alpha x^{-\alpha }\int_{0}^{x}t^{\alpha -1}\overline{F}%
(t)dt=\alpha x^{-\alpha }W_{\alpha }(x)\textrm{.}
$$
\end{proof}

%
%
%

\subsection{Main relations}

It follows our first main result.

\begin{theorem}\label{th1}
Let $\alpha>0$ and $0\leq\theta \leq \alpha $.
We have:

\begin{itemize}
	\item[(i)]
If $0<\theta <\alpha $, then the following statements are equivalent:
\begin{itemize}
	\item[(a)] $\overline{F}(x)\in RV_{\theta-\alpha}$;
	\item[(b)] $\overline{G}_{\alpha}(x)\in RV_{\theta-\alpha}$;
	\item[(c)] $W_{\alpha}(x)\in RV_{\theta}$;
	\item[(d)] $\overline{F}(x)\big/\overline{G}_{\alpha}(x)\sim\theta\big/\alpha$;
	\item[(e)] $H_{\alpha}(x)\in RV_{\theta}$;
	\item[(f)] $x^{\alpha}\overline{F}(x)\big/H_{\alpha}(x)\sim\theta\big/(\alpha-\theta)$;
	\item[(g)] $x^{\alpha }\overline{G}_{\alpha}(x)\big/H_{\alpha}(x)\sim \alpha \big/(\alpha -\theta)$; and,
	\item[(h)] $x^{\alpha }\overline{F}(x)\big/W_{\alpha}(x)\sim \theta$.
\end{itemize}

	\item[(ii)]
If $\theta=0$, then the statements (b), (c), (d), (e), (f), (g) and (h) are equivalent.

	\item[(iii)]
If $\theta=\alpha$, then the statements (a), (b) and (c) are equivalent, and (b) $\Longrightarrow$ (d).

\end{itemize}
\end{theorem}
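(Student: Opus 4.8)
The plan is to establish the theorem by exploiting the three structural identities already proved: from Proposition~\ref{propg}(ii) we have $\overline{G}_{\alpha}(x)=\alpha x^{-\alpha}W_{\alpha}(x)$, which makes (b) and (c) trivially equivalent in all three cases since $\overline{G}_{\alpha}(x)\in RV_{\theta-\alpha}$ iff $W_{\alpha}(x)\in RV_{\theta}$; from equation~(\ref{3}) we have $H_{\alpha}(x)=\alpha W_{\alpha}(x)-x^{\alpha}\overline{F}(x)$; and from Proposition~\ref{propg}(i) we have $\overline{F}(x)=\overline{G}_{\alpha}(x)-x^{-\alpha}H_{\alpha}(x)$. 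I would first treat part (i), where $0<\theta<\alpha$, as the main case, then read off (ii) and (iii) by inspecting which implications survive at the boundary.

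For part (i), the backbone is the equivalence (a)$\Leftrightarrow$(c)$\Leftrightarrow$(e), to which all the ratio statements (d),(f),(g),(h) attach. The implication (a)$\Rightarrow$(c) is Karamata's theorem: if $\overline{F}\in RV_{\theta-\alpha}$ with $\theta-\alpha>-\alpha$ (i.e. $\theta>0$), then $W_{\alpha}(x)=\int_0^x y^{\alpha-1}\overline{F}(y)\,dy\in RV_{\theta}$ and moreover $x^{\alpha-1}\overline{F}(x)\big/W_{\alpha}(x)\to\theta$, which is exactly (h); multiplying numerator and denominator appropriately and using $\overline{G}_{\alpha}=\alpha x^{-\alpha}W_{\alpha}$ gives (d). For the converse direction I would use the Monotone Density Theorem: $W_{\alpha}$ has derivative $x^{\alpha-1}\overline{F}(x)$, and $\overline{F}$ is monotone, so $W_{\alpha}\in RV_{\theta}$ forces $x^{\alpha-1}\overline{F}(x)\in RV_{\theta-1}$, i.e. $\overline{F}\in RV_{\theta-\alpha}$, again yielding the asymptotic ratio in (h). Once (a),(c),(h),(b),(d) are linked, I bring in $H_{\alpha}$: from $H_{\alpha}(x)=\alpha W_{\alpha}(x)-x^{\alpha}\overline{F}(x)$ and (h), which says $x^{\alpha}\overline{F}(x)\sim\theta W_{\alpha}(x)$, I get $H_{\alpha}(x)\sim(\alpha-\theta)W_{\alpha}(x)$; since $\alpha-\theta>0$ this shows $H_{\alpha}\in RV_{\theta}$ (that is (e)) and simultaneously delivers (f) via $x^{\alpha}\overline{F}(x)\big/H_{\alpha}(x)\sim \theta W_{\alpha}\big/((\alpha-\theta)W_{\alpha})=\theta/(\alpha-\theta)$, and (g) via $x^{\alpha}\overline{G}_{\alpha}(x)=\alpha W_{\alpha}(x)\sim (\alpha/(\alpha-\theta))H_{\alpha}(x)$. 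To close the loop I still need (e)$\Rightarrow$ one of the earlier statements, or (f)$\Rightarrow$(a): from (f), $x^{\alpha}\overline{F}(x)\sim(\theta/(\alpha-\theta))H_{\alpha}(x)$, substituting into $H_{\alpha}=\alpha W_{\alpha}-x^{\alpha}\overline{F}$ gives $H_{\alpha}\sim(\alpha-\theta)W_{\alpha}$, hence also $x^{\alpha}\overline{F}\sim\theta W_{\alpha}$, and Proposition~\ref{prop1} (the representation $\overline{F}(x)=\alpha\int_x^{\infty}z^{-\alpha-1}H_{\alpha}(z)\,dz - x^{-\alpha}H_{\alpha}(x)$) together with Karamata's theorem applied to $H_{\alpha}\in RV_{\theta}$ (valid because $\theta-\alpha-1<-1$) returns $\overline{F}\in RV_{\theta-\alpha}$. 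Thus every statement implies and is implied by (a).

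For part (ii), $\theta=0$: statement (a), $\overline{F}\in RV_{-\alpha}$, is genuinely stronger than $\overline{F}$ being merely slowly varying of negative-shifted type would suggest, and more to the point Karamata's direct half fails at the boundary exponent $-\alpha$ for the integral $\int_0^x y^{\alpha-1}\overline{F}(y)\,dy$ — a function in $RV_{-\alpha}$ need not have $W_{\alpha}$ slowly varying (the integral can diverge to a genuinely larger order, or the constant in the ratio degenerates), so (a) drops out, but the chain among (b)–(h) is preserved because it only used $H_{\alpha}=\alpha W_{\alpha}-x^{\alpha}\overline{F}$, the relation $\overline{G}_{\alpha}=\alpha x^{-\alpha}W_{\alpha}$, Proposition~\ref{prop1}, and the Monotone Density Theorem, none of which needs $\theta>0$; here (h) reads $x^{\alpha}\overline{F}(x)\big/W_{\alpha}(x)\to 0$, i.e. $x^{\alpha}\overline{F}(x)=o(W_{\alpha}(x))$, so $H_{\alpha}\sim\alpha W_{\alpha}$ and (c)$\Leftrightarrow$(e) still holds. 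For part (iii), $\theta=\alpha$: now $\alpha-\theta=0$, so $H_{\alpha}(x)=\alpha W_{\alpha}(x)-x^{\alpha}\overline{F}(x)$ is a difference of two $RV_{\alpha}$-functions and the ratios (f),(g) blow up; only (a)$\Leftrightarrow$(b)$\Leftrightarrow$(c) survives — (a)$\Leftrightarrow$(c) by Karamata/Monotone-Density with exponent $\theta-\alpha=0$ and $\theta=\alpha>0$ (so the integral is still genuinely regularly varying), (b)$\Leftrightarrow$(c) by Proposition~\ref{propg}(ii) — and (b)$\Rightarrow$(d) follows from (d) being equivalent to $x^{\alpha}\overline{F}(x)\big/(\alpha W_{\alpha}(x))\to\theta/\alpha=1$, which one gets from $\overline{F}\in RV_0$ by Karamata's theorem giving $x^{\alpha-1}\overline{F}(x)/W_{\alpha}(x)\to\alpha$.

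The main obstacle I anticipate is handling the two boundary cases cleanly: one must pinpoint exactly which direction of Karamata's theorem degenerates at $\theta=0$ (the direct implication (a)$\Rightarrow$(c) and the ratio in (h)) versus $\theta=\alpha$ (the cancellation in $H_{\alpha}$ killing (e),(f),(g),(h)), and verify that the remaining equivalences genuinely go through without secretly invoking the failed step — in particular, checking that the Monotone Density Theorem argument for (c)$\Rightarrow$(a) remains valid at exponent $0$ but that no analogous tool rescues (a)$\Rightarrow$(c) there. A secondary care point is the uniform-convergence / dominated-convergence justification when passing from $H_{\alpha}\in RV_{\theta}$ back to $\overline{F}$ through the tail integral $\int_x^\infty z^{-\alpha-1}H_\alpha(z)\,dz$, which needs $\theta<\alpha+1$ — harmless in all cases at issue but worth stating.
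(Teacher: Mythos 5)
Your proposal follows essentially the same route as the paper: everything is reduced to the identities $\overline{G}_{\alpha}(x)=\alpha x^{-\alpha}W_{\alpha}(x)$, $H_{\alpha}(x)=\alpha W_{\alpha}(x)-x^{\alpha}\overline{F}(x)$ and Proposition~\ref{prop1}, combined with the two halves of Karamata's theorem (Proposition~\ref{karamatarv}) and a monotone-density-type argument for the hard converse $W_{\alpha}\in RV_{\theta}\Rightarrow\overline{F}\in RV_{\theta-\alpha}$. The paper packages that converse as Proposition~\ref{proposition20210716} and distributes the bookkeeping over Lemmas~\ref{lemma1}--\ref{lemma3}, but the mathematical content is the same, and your diagnosis of the boundary cases (statement (a) detaching at $\theta=0$ because the constant in the Karamata ratio degenerates; (e)--(h) detaching at $\theta=\alpha$ because of cancellation in $H_{\alpha}=\alpha W_{\alpha}-x^{\alpha}\overline{F}$) agrees with the theorem.

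Two steps need tightening before the argument is complete. First, you apply the Monotone Density Theorem to $W_{\alpha}$ with density $y^{\alpha-1}\overline{F}(y)$; for $\alpha>1$ this density need not be monotone, so the theorem does not apply verbatim. The repair is exactly the paper's Proposition~\ref{proposition20210716}: sandwich $W_{\alpha}(zx)-W_{\alpha}(x)=\int_{x}^{zx}y^{\alpha-1}\overline{F}(y)\,dy$ between $\tfrac{1}{\alpha}x^{\alpha}(z^{\alpha}-1)\overline{F}(zx)$ and $\tfrac{1}{\alpha}x^{\alpha}(z^{\alpha}-1)\overline{F}(x)$ using only the monotonicity of $\overline{F}$ and let $z\downarrow 1$ (equivalently, substitute $s=y^{\alpha}$ so that the density becomes the monotone function $\overline{F}(s^{1/\alpha})$). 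Second, at $\theta=0$ the link $(h)\Rightarrow(c)$ (equivalently $(d)\Rightarrow(b)$) is not covered by (\ref{k1}), whose equivalence requires index strictly greater than $-1$: from $x^{\alpha}\overline{F}(x)\big/W_{\alpha}(x)\to0$ you must argue separately that $W_{\alpha}\in RV_{0}$, for instance by writing
$\log W_{\alpha}(tx)-\log W_{\alpha}(x)=\int_{1}^{t}\frac{(sx)^{\alpha}\overline{F}(sx)}{W_{\alpha}(sx)}\,\frac{ds}{s}\to0$,
or as the paper does in Lemma~\ref{lemma1}(iii) via L'Hopital and (\ref{k2}) applied to $x^{-1}\overline{G}_{\alpha}(x)$. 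With these two repairs (and fixing the typo $x^{\alpha-1}\overline{F}(x)/W_{\alpha}(x)\to\theta$, which should read $x^{\alpha}\overline{F}(x)/W_{\alpha}(x)\to\theta$), your argument goes through.
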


\begin{remarks}~

\begin{itemize}
	\item[1)]
	Taking $\theta=0$, Theorem \ref{th1}(e), -(f) and -(h) correspond to (17), (18) and (19) in Theorem 3 by 
	\cite{Seneta}
	, respectively.

	\item[2)] Theorem \ref{th1}(a), -(e) and -(f) are proved in VIII.9 by 
	\cite{Feller}, see e.g. Theorem 8.1.2 in 
	\cite{Bingham}.

\item[3)]
The conjecture indicated in
(\ref{conjectureSeneta}) is contained and extended in the implication, for $0\leq\theta<\alpha$, Theorem \ref{th1}(c) $\Longrightarrow$ Theorem \ref{th1}(e).

	\item[4)] Theorem \ref{th1}(a), -(c), -(e), -(h) and a combination of -(f) and -(h) correspond to (1.6), (1.4), (1.5), (1.7) and (1.8) in Theorem 1.1 by 
	\cite{Kevei}, respectively.

\end{itemize}
\end{remarks}

We continue with results when some moments are finite.

\begin{theorem}\label{th2}
Let $\alpha>0$ and $\theta>\alpha $.
	Assume $m(\alpha)<\infty$. 
	Set $\overline{W}_{\alpha}(x)=W_{\alpha}(\infty)-W_{\alpha}(x)$.
	The following statements are equivalent:

	\begin{itemize}
		\item[(a)] $\overline{F}(x)\in RV_{-\theta}$;
		\item[(b)] $\overline{W}_{\alpha}(x)\in RV_{\alpha-\theta}$;
		\item[(c)] $x^{-\alpha}m(\alpha)-\overline{G}_{\alpha}(x)\in RV_{-\theta}$;
		\item[(d)] $\overline{W}_{\alpha}(x)\big/(x^{\alpha}\overline{F}(x))\sim 1\big/(\theta-\alpha)$;
		\item[(e)] $(x^{-\alpha}m(\alpha)-\overline{G}_{\alpha}(x))\big/\overline{F}(x)\sim \alpha\big/(\theta-\alpha)$.
		\item[(f)] $m(\alpha )-H_{\alpha }(x)\in RV_{\alpha -\theta }$; and,
		\item[(g)] $(m(\alpha )-H_{\alpha }(x))\big/(x^{\alpha}\overline{F}(x))\sim\theta\big/({\theta -\alpha })$.
	\end{itemize}
\end{theorem}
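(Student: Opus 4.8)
The plan is to take statement (a) as the hub and show that it is equivalent to each of (b)--(g), using only three tools: the algebraic identities coming from Propositions \ref{prop1}, \ref{Newprop1}, \ref{propg} specialised to $m(\alpha)<\infty$, Karamata's theorem, and the monotone density theorem. First I would record that $m(\alpha)<\infty$ forces $H_\alpha(\infty)=m(\alpha)$ and, by Fubini, $W_\alpha(\infty)=m(\alpha)/\alpha$ (so that $\overline W_\alpha$ is well defined), and that the following hold for all $x\ge 0$:
\[
\alpha\overline W_\alpha(x)=m(\alpha)-H_\alpha(x)-x^\alpha\overline F(x),\qquad
x^{-\alpha}m(\alpha)-\overline G_\alpha(x)=\alpha x^{-\alpha}\overline W_\alpha(x),
\]
\[
\overline F(x)=x^{-\alpha}\big(m(\alpha)-H_\alpha(x)\big)-\alpha\int_x^\infty z^{-\alpha-1}\big(m(\alpha)-H_\alpha(z)\big)\,dz,
\]
the first two being rearrangements of Proposition \ref{propg} and the third a rearrangement of Proposition \ref{prop1}. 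Dividing the first identity by $x^\alpha\overline F(x)$ and the second by $\overline F(x)$ shows at once that (d), (e) and (g) are mutually equivalent with the stated constants, this being a purely algebraic equivalence.

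Next I would prove that (a) implies all the others. Since $\overline W_\alpha(x)=\int_x^\infty y^{\alpha-1}\overline F(y)\,dy$ and $\overline F\in RV_{-\theta}$ with $\theta>\alpha$, the integrand lies in $RV_{\alpha-1-\theta}$ with index below $-1$, so Karamata's theorem gives $\overline W_\alpha(x)\sim x^\alpha\overline F(x)/(\theta-\alpha)$; this is (d) (hence (e), (g)), it gives $\overline W_\alpha\in RV_{\alpha-\theta}$, i.e. (b), and then (c) follows from the second identity. Substituting the asymptotics of $\overline W_\alpha$ into the first identity yields $m(\alpha)-H_\alpha(x)\sim\frac{\theta}{\theta-\alpha}\,x^\alpha\overline F(x)\in RV_{\alpha-\theta}$, i.e. (f). For the reverse (f)$\Rightarrow$(a): with $U:=m(\alpha)-H_\alpha\in RV_{\alpha-\theta}$ the integrand $z^{-\alpha-1}U(z)$ is in $RV_{-1-\theta}$, so by Karamata the integral in the third identity is $\sim x^{-\alpha}U(x)/\theta$, whence $\overline F(x)\sim\frac{\theta-\alpha}{\theta}\,x^{-\alpha}U(x)\in RV_{-\theta}$.

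What remains is to run one of (b), (c), (d), (e), (g) back to (a); this Tauberian ``de-integration'' is the step I expect to be the main obstacle, because the natural density $x^{\alpha-1}\overline F(x)$ of $\overline W_\alpha$ need not be monotone when $\alpha>1$, so the monotone density theorem cannot be applied to it directly. The device is to work instead with $K(x):=x^{-\alpha}m(\alpha)-\overline G_\alpha(x)$. By the second identity $K(x)=\alpha x^{-\alpha}\overline W_\alpha(x)$ is a product of two nonnegative nonincreasing functions, hence nonincreasing, with $K(\infty)=0$, so $K(x)=\int_x^\infty(-K'(y))\,dy$; and from Proposition \ref{Newprop1}(ii), which gives $\overline G_\alpha'(x)=\frac{\alpha}{x}(\overline F(x)-\overline G_\alpha(x))$, a short computation produces
\[
-K'(x)=\frac{\alpha}{x}\big(K(x)+\overline F(x)\big),\qquad
\overline F(x)=\frac{x}{\alpha}\big(-K'(x)\big)-K(x),
\]
so that $-K'$ is again a product of nonnegative nonincreasing functions and hence nonincreasing.

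Now, assuming (c), i.e. $K\in RV_{-\theta}$, the monotone density theorem gives $-K'(x)\sim\theta x^{-1}K(x)$, and the second of the two displayed relations yields $\overline F(x)\sim\frac{\theta-\alpha}{\alpha}K(x)\in RV_{-\theta}$, which is (a). Assuming instead (e), i.e. $K(x)\sim\frac{\alpha}{\theta-\alpha}\overline F(x)$, the same two relations first give $-K'(x)\sim\theta x^{-1}K(x)$ and then, since $K=\int_x^\infty(-K')$ with $-K'$ nonincreasing, $K\in RV_{-\theta}$, i.e. (c). Chaining (a)$\Rightarrow$(b)$\Leftrightarrow$(c)$\Rightarrow$(a), (a)$\Leftrightarrow$(f), and (a)$\Rightarrow$(d)$\Leftrightarrow$(e)$\Leftrightarrow$(g)$\Rightarrow$(c)$\Rightarrow$(a) establishes all seven equivalences, and the limit constants obtained en route are exactly those in the statement.
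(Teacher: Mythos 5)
Your argument is correct, and its skeleton --- the identities $\alpha\overline{W}_{\alpha}(x)=m(\alpha)-H_{\alpha}(x)-x^{\alpha}\overline{F}(x)$ and $x^{-\alpha}m(\alpha)-\overline{G}_{\alpha}(x)=\alpha x^{-\alpha}\overline{W}_{\alpha}(x)$ (the paper's Remarks \ref{remarks1} and \ref{remarks2}), the purely algebraic equivalence of (d), (e), (g), and the direct half of Karamata's theorem for (a) $\Rightarrow$ everything else --- coincides with the paper's Lemmas \ref{lemma1bis} and \ref{lemma2bis}. Where you genuinely diverge is in the Tauberian direction. For (b) $\Rightarrow$ (a) the paper sandwiches the increment $W_{\alpha}(zx)-W_{\alpha}(x)=\int_x^{zx}y^{\alpha-1}\overline{F}(y)\,dy$ between $\frac{1}{\alpha}\overline{F}(zx)x^{\alpha}(z^{\alpha}-1)$ and $\frac{1}{\alpha}\overline{F}(x)x^{\alpha}(z^{\alpha}-1)$ using only the monotonicity of $\overline{F}$ and then lets $z\downarrow 1$ (Proposition \ref{proposition20210716} via Remark \ref{remark20210716}); for (d) $\Rightarrow$ (a) and (g) $\Rightarrow$ (a) it invokes the converse (``only if'') half of (\ref{k2}) applied directly to $x^{\alpha-1}\overline{F}(x)$, which requires no monotonicity at all. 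You instead funnel every backward implication through $K(x)=x^{-\alpha}m(\alpha)-\overline{G}_{\alpha}(x)=\alpha x^{-\alpha}\overline{W}_{\alpha}(x)$, verify that $K$ and $-K'=\frac{\alpha}{x}\bigl(K+\overline{F}\bigr)$ are nonincreasing, and apply the monotone density theorem together with $\overline{F}=\frac{x}{\alpha}(-K')-K$; all the computations here check out, and the constants agree with the statement. This is a valid and rather elegant alternative: it provides one uniform mechanism for all the reverse steps and correctly pinpoints why a naive monotone-density argument on $\overline{W}_{\alpha}$ itself would fail for $\alpha>1$ (its density $x^{\alpha-1}\overline{F}(x)$ need not be monotone). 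Its cost is having to differentiate $G_{\alpha}$ via Proposition \ref{Newprop1}(ii); its payoff is that neither the sandwich proposition nor the converse Karamata step is needed.
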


\begin{remark}
	Theorem \ref{th2} (a), -(f) and -(g) are proved in VIII.9 by 
	\cite{Feller}, see e.g. Theorem 8.1.2 in 
\cite{Bingham}.
\end{remark}

Next, results involving the de Haan class follow.

\begin{theorem}\label{th3}
Let $\alpha>0$.
Let $L(x)\in RV_{0}$.
Let $\beta,\lambda\geq0$ so that $\beta =\alpha \lambda $.
	The following statements are equivalent:
\begin{itemize}
	\item[(a)] $x^{\alpha }\overline{G}_{\alpha}(x)\in \Pi _{\beta
}(L)$;
	\item[(b)] $x^{\alpha }\overline{F}(x)\big/L(x)\sim \lambda $; and,
	\item[(c)] $H_{\alpha }(x)\in \Pi _{\beta }(L)$
\end{itemize}
\end{theorem}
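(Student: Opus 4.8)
The plan is to reduce everything to the single function $U(x):=x^{\alpha}\overline{G}_{\alpha}(x)=\alpha W_{\alpha}(x)=\alpha\int_{0}^{x}y^{\alpha-1}\overline{F}(y)\,dy$ (Proposition \ref{propg}(ii)) and to $V(x):=x^{\alpha}\overline{F}(x)$, noting that by (\ref{3}) (equivalently Proposition \ref{propg}(i)) one has $H_{\alpha}(x)=U(x)-V(x)$, and that for $t>1$
\[
U(tx)-U(x)=\alpha\int_{x}^{tx}y^{\alpha-1}\overline{F}(y)\,dy=\alpha\int_{x}^{tx}\frac{V(y)}{y}\,dy .
\]
Since $\overline{F}$ is nonincreasing, $y\mapsto V(y)/y^{\alpha}=\overline{F}(y)$ is nonincreasing, which gives the sandwich
\[
V(tx)\,(1-t^{-\alpha})\ \le\ U(tx)-U(x)\ \le\ V(x)\,(t^{\alpha}-1),\qquad t>1 ,
\]
the basic engine of the argument.

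First I would prove (a)$\Longleftrightarrow$(b). For (b)$\Longrightarrow$(a): divide $U(tx)-U(x)=\alpha\int_{x}^{tx}y^{-1}V(y)\,dy$ by $L(x)$; since $V(y)/L(y)\to\lambda$ and, by the uniform convergence theorem for slowly varying functions, $L(y)/L(x)\to1$ uniformly for $y\in[x,tx]$, the integrand converges to $\lambda$ uniformly, whence $(U(tx)-U(x))/L(x)\to\alpha\lambda\log t=\beta\log t$, i.e.\ $U\in\Pi_{\beta}(L)$. For (a)$\Longrightarrow$(b): from the right inequality and $U(tx)-U(x)\sim\beta L(x)\log t$ we get $\liminf_{x}V(x)/L(x)\ge\beta\log t/(t^{\alpha}-1)$, and from the left inequality together with $L(x)\sim L(tx)$ we get $\limsup_{x}V(x)/L(x)\le\beta\log t/(1-t^{-\alpha})$; letting $t\downarrow1$, both bounds tend to $\beta/\alpha=\lambda$, so $V(x)/L(x)\to\lambda$.

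Next I would show (b)$\Longrightarrow$(c): by the equivalence just proved, (b) also gives $U\in\Pi_{\beta}(L)$, and $V(x)/L(x)\to\lambda$ with $L\in RV_{0}$ forces $V(tx)-V(x)=o(L(x))$; subtracting, $H_{\alpha}(tx)-H_{\alpha}(x)=(U(tx)-U(x))-(V(tx)-V(x))=\beta L(x)\log t+o(L(x))$, i.e.\ $H_{\alpha}\in\Pi_{\beta}(L)$. For the remaining implication (c)$\Longrightarrow$(b), multiply (\ref{5}) by $x^{\alpha}$ and use $\alpha x^{\alpha}\int_{x}^{\infty}z^{-\alpha-1}\,dz=1$ to obtain
\[
V(x)=\alpha x^{\alpha}\int_{x}^{\infty}z^{-\alpha-1}\bigl(H_{\alpha}(z)-H_{\alpha}(x)\bigr)\,dz=\alpha\int_{1}^{\infty}s^{-\alpha-1}\bigl(H_{\alpha}(xs)-H_{\alpha}(x)\bigr)\,ds
\]
(substituting $z=xs$). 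If $H_{\alpha}\in\Pi_{\beta}(L)$ then $(H_{\alpha}(xs)-H_{\alpha}(x))/L(x)\to\beta\log s$ for each $s\ge1$; dividing by $L(x)$ and passing to the limit under the integral would give $V(x)/L(x)\to\alpha\beta\int_{1}^{\infty}s^{-\alpha-1}\log s\,ds=\alpha\beta/\alpha^{2}=\beta/\alpha=\lambda$.

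The main obstacle is exactly this last interchange of limit and integral. To justify it I would invoke the uniform (Potter-type) bounds for the de Haan class: since $H_{\alpha}$ is nondecreasing and lies in $\Pi_{\beta}(L)$, for any $\delta\in(0,\alpha)$ there are $C>0$ and $x_{0}$ with $|H_{\alpha}(xs)-H_{\alpha}(x)|\le C\,L(x)\,s^{\delta}$ for all $s\ge1$, $x\ge x_{0}$, and $s^{-\alpha-1}s^{\delta}$ is integrable on $[1,\infty)$, so dominated convergence applies. All three implications also cover the case $\beta=\lambda=0$ (the limits become $0$, and $V\ge0$ forces $V/L\to0$). Apart from this de Haan uniform-bound input, the proof uses only the monotonicity of $\overline{F}$ and $H_{\alpha}$, Propositions \ref{prop1} and \ref{propg}, and the uniform convergence theorem for slowly varying functions.
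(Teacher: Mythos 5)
Your proof is correct and follows essentially the same route as the paper: the equivalence (a)$\Leftrightarrow$(b) is the paper's Lemma \ref{lemma1}(iv) (direct computation of $(W_{\alpha}(tx)-W_{\alpha}(x))/L(x)$ via uniform convergence of $L$ one way, and the monotonicity sandwich of Proposition \ref{proposition20210716} with $A=L$ the other way), while (b)$\Leftrightarrow$(c) is the paper's Lemma \ref{lemma2}(iii) (cancellation of the boundary terms $x^{\alpha}\overline{F}(x)$ in the decomposition $H_{\alpha}=\alpha W_{\alpha}-x^{\alpha}\overline{F}$ one way, and the representation $x^{\alpha}\overline{F}(x)=\alpha\int_{1}^{\infty}s^{-\alpha-1}(H_{\alpha}(xs)-H_{\alpha}(x))\,ds$ from Proposition \ref{prop1} with dominated convergence the other way). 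The only cosmetic difference is that you justify the final limit interchange by a Potter-type bound for $\Pi$-varying functions where the paper cites Geluk--de Haan, Cor.~1.15; both are standard and adequate.
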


\begin{remark}
The relation (b) $\Longleftrightarrow$ (c) was identified in Theorem 1.1 by 
\cite{Kevei}.
\end{remark}

To prove the previous theorems, we present several results which 
are organized in subsections by considering relations among $\overline{F}$, $H_{\alpha }$, $W_{\alpha }$ and $\overline{G}_{\alpha }$.
They are based mainly on the following well-known result, see 
\cite{Karamata} 
and e.g. Theorem 1.2.1 by 
\cite{dehaan}:

\begin{proposition}\label{karamatarv}
Suppose $U:\mathbb{R}^+\to\mathbb{R}^+$ is Lebesgue-summable on finite intervals. Then
\begin{eqnarray}
U(x)\in RV_{\alpha}\textrm{, }\alpha>-1 & \textrm{iff} & xU(x)\sim (\alpha+1)\int_0^x U(t)dt\textrm{;} \label{k1} \\
 & & \nonumber \\
U(x)\in RV_{\alpha}\textrm{, }\alpha<-1 & \textrm{iff} & xU(x)\sim -(\alpha+1)\int_x^{\infty} U(t)dt\textrm{.} \label{k2}
\end{eqnarray}
\end{proposition}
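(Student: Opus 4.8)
The plan is to recognise this as the classical Karamata theorem and to establish each of the two equivalences separately, treating the ranges $\alpha>-1$ and $\alpha<-1$ by mirror-image arguments. Throughout I would write $U(x)=x^{\alpha}\ell(x)$, so that $U\in RV_{\alpha}$ means exactly that $\ell$ is slowly varying, and I would rely on two refinements of the definition of slow variation that are available before Karamata's theorem in any treatment of the subject (see e.g.\ \cite{Bingham}): the Uniform Convergence Theorem, that $\ell(sx)/\ell(x)\to1$ uniformly for $s$ in compact subsets of $(0,\infty)$, and Potter's bounds, that for each $\eta>0$ there is $X_{0}$ with $\ell(sx)/\ell(x)\le(1+\eta)\max\{s^{\eta},s^{-\eta}\}$ whenever $x\ge X_{0}$ and $sx\ge X_{0}$. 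I would also use the elementary facts $x^{\beta}\ell(x)\to\infty$ for $\beta>0$ and $x^{\beta}\ell(x)\to0$ for $\beta<0$, and Karamata's representation theorem, that $\ell$ is slowly varying iff $\ell(x)=c(x)\exp(\int_{a}^{x}\varepsilon(t)t^{-1}\,dt)$ with $c(x)$ convergent to a finite positive limit and $\varepsilon(t)\to0$.

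For the forward implication with $\alpha>-1$ I would put $V(x)=\int_{0}^{x}U(t)\,dt$ and substitute $t=sx$ to get $V(x)\big/(x^{\alpha+1}\ell(x))=\int_{0}^{1}s^{\alpha}\,\ell(sx)/\ell(x)\,ds$, then split the integral at $s=\delta$ and $s=X_{0}/x$. On $[\delta,1]$ the Uniform Convergence Theorem yields the limit $(1-\delta^{\alpha+1})/(\alpha+1)$; on $[X_{0}/x,\delta]$ Potter's bound with $0<\eta<\alpha+1$ dominates the integrand by $(1+\eta)s^{\alpha-\eta}$, integrable near $0$ and of integral $O(\delta^{\alpha+1-\eta})$ over $[0,\delta]$; and $[0,X_{0}/x]$ contributes $(\int_{0}^{X_{0}}t^{\alpha}\ell(t)\,dt)\big/(x^{\alpha+1}\ell(x))$, a fixed finite numerator (by local summability of $U$) over a denominator tending to $\infty$. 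Letting $x\to\infty$ and then $\delta\to0$ gives $xU(x)\sim(\alpha+1)V(x)$. The case $\alpha<-1$ is lighter: Potter's bound gives $U(t)\le(1+\eta)\ell(x)x^{-\eta}t^{\alpha+\eta}$ for $t\ge x\ge X_{0}$, and with $\alpha+\eta<-1$ this both shows $\widetilde V(x):=\int_{x}^{\infty}U(t)\,dt<\infty$ and, after the substitution $t=sx$ over $s\in[1,\infty)$, supplies the $x$-free integrable dominant $(1+\eta)s^{\alpha+\eta}$, so dominated convergence gives $\int_{1}^{\infty}s^{\alpha}\,\ell(sx)/\ell(x)\,ds\to-1/(\alpha+1)$ and hence $xU(x)\sim-(\alpha+1)\widetilde V(x)$. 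I expect this forward direction to be the only genuine work: interchanging limit and integral in $\int_{0}^{1}s^{\alpha}\ell(sx)/\ell(x)\,ds$ is \emph{not} justified by uniform convergence alone, because the convergence degrades as $s\downarrow0$, so the point is to have Potter domination on $[X_{0}/x,\delta]$ and to dispose of the truly non-uniform sliver $[0,X_{0}/x]$ by the crude estimate $\int_{0}^{X_{0}}t^{\alpha}\ell(t)\,dt=O(1)=o(x^{\alpha+1}\ell(x))$.

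For the converse with $\alpha>-1$ I would assume $xU(x)\sim(\alpha+1)V(x)$ with $V(x)=\int_{0}^{x}U$. Since $U$ is positive and locally summable, $V$ is locally absolutely continuous with $V'=U$ a.e., and the hypothesis reads $V'(x)/V(x)=(\alpha+1)/x+\varepsilon(x)/x$ with $\varepsilon(x)\to0$; integrating from a fixed point gives $V(x)=\text{const}\cdot x^{\alpha+1}\exp(\int_{x_{0}}^{x}\varepsilon(t)t^{-1}\,dt)$, which is of Karamata's representation form, so $V\in RV_{\alpha+1}$. Then $U(x)\sim(\alpha+1)V(x)/x$, and since $V(x)/x\in RV_{\alpha}$ and any function asymptotically equivalent to a member of $RV_{\alpha}$ lies in $RV_{\alpha}$, I conclude $U\in RV_{\alpha}$. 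The case $\alpha<-1$ runs identically with $\widetilde V(x)=\int_{x}^{\infty}U$ (finite by hypothesis): now $\widetilde V'=-U$ a.e., the hypothesis gives $\widetilde V'(x)/\widetilde V(x)=(\alpha+1)/x+o(1/x)$, hence $\widetilde V\in RV_{\alpha+1}$ and $U(x)\sim-(\alpha+1)\widetilde V(x)/x\in RV_{\alpha}$. This direction is soft once one spots the logarithmic-derivative identity and invokes Karamata's representation theorem.
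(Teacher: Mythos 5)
The paper does not prove this proposition at all: it is stated as "the following well-known result" with citations to Karamata (1930) and to Theorem 1.2.1 of de Haan's tract, so there is no in-paper argument to compare against. Your proposal is a correct, self-contained proof of the classical Karamata integral theorem, and it follows the standard textbook route: for the direct half, the substitution $t=sx$, the Uniform Convergence Theorem on $[\delta,1]$, Potter's bounds on $[X_0/x,\delta]$ (resp.\ the $x$-free integrable dominant on $[1,\infty)$ when $\alpha<-1$), and the crude estimate $\int_0^{X_0}U=O(1)=o(x^{\alpha+1}\ell(x))$ on the residual sliver; for the converse half, the logarithmic-derivative identity plus the representation theorem. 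Two small points you should make explicit if you write this up: (1) in the converse, $V'=U$ only almost everywhere, but since $V$ is locally absolutely continuous and strictly positive for large $x$, the identity $\log V(x)-\log V(x_0)=\int_{x_0}^x V'(t)V(t)^{-1}\,dt$ still holds and the a.e.\ defined $\varepsilon$ still integrates correctly, so the representation argument goes through; (2) in the case $\alpha<-1$ the finiteness of $\int_x^\infty U$ is part of the hypothesis in one direction and must be derived from Potter's bound in the other, as you do. Neither is a gap, and your proof is more informative than the paper's bare citation.
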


The following proposition is also used to prove some of those results.

\begin{proposition}\label{proposition20210716}
Let $\alpha>0$.
Assume that there exist $A(x)$, $B(x)$, and $C(x)$ satisfying, for any $z>1$,
$$
\frac{W(zx)-W(x)}{A(x)}\to B(z)\textrm{\quad and \quad}\frac{W(zx)-W(x)}{A(zx)}\to C(z)\textrm{,}
$$
such that, for some $\xi$, $B(z)\big/(z^{\alpha }-1)\to\xi\big/\alpha$ and $C(z)\big/(1-z^{-\alpha })\to\xi\big/\alpha$.
Then, we have
$$
\frac{\overline{F}(x)x^{\alpha }}{A(x)}\to\xi\textrm{.}
$$
\end{proposition}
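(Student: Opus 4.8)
The plan is to exploit the defining relation $W_\alpha(x) = \int_0^x y^{\alpha-1}\overline F(y)\,dy$ together with the two limit relations, using the monotonicity of $\overline F$ to squeeze $x^\alpha \overline F(x)$ between increments of $W$. First I would observe that since $\overline F$ is non-increasing, for any $z>1$ we have the elementary sandwich
$$
\overline F(zx)\,\frac{(zx)^\alpha - x^\alpha}{\alpha} \;\le\; W(zx)-W(x) = \int_x^{zx} y^{\alpha-1}\overline F(y)\,dy \;\le\; \overline F(x)\,\frac{(zx)^\alpha - x^\alpha}{\alpha}.
$$
Dividing through by $A(x)$ and letting $x\to\infty$, the right inequality combined with the first hypothesis gives $\limsup_{x\to\infty} \overline F(x)x^\alpha/A(x) \cdot \frac{z^\alpha-1}{\alpha} \ge B(z)$, hence $\limsup \overline F(x)x^\alpha/A(x) \ge B(z)\alpha/(z^\alpha-1) \to \xi$ as $z\downarrow 1$. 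For the reverse direction I would divide the left inequality by $A(zx)$ and use the second hypothesis: $\liminf_{x\to\infty}\overline F(zx)(zx)^\alpha/A(zx)\cdot\frac{1-z^{-\alpha}}{\alpha} \le C(z)$, so $\liminf \overline F(x)x^\alpha/A(x) \le C(z)\alpha/(1-z^{-\alpha}) \to \xi$ as $z\downarrow 1$. Combining the two bounds yields $\lim_{x\to\infty}\overline F(x)x^\alpha/A(x) = \xi$, as claimed.

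The key steps, in order, are: (1) write $W(zx)-W(x)$ as an integral and bound it above and below using monotonicity of $\overline F$; (2) feed the upper bound into the first hypothesis (with denominator $A(x)$) to get a lower bound on $\liminf$ — or rather an inequality that, after dividing by $(z^\alpha-1)/\alpha$ and sending $z\downarrow 1$, pins down $\limsup \overline F(x)x^\alpha/A(x)\ge\xi$; (3) feed the lower bound into the second hypothesis (with denominator $A(zx)$, after the substitution $x\mapsto x$ so the argument $zx$ matches), giving $\liminf \overline F(x)x^\alpha/A(x)\le\xi$; (4) conclude. One should be slightly careful in step (3) to rewrite $\overline F(zx)(zx)^\alpha/A(zx)$ as the same quantity evaluated at the point $u=zx\to\infty$, which is legitimate since $z$ is fixed.

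The main obstacle I anticipate is a subtle one about the \emph{order} of limits and whether the $\limsup$/$\liminf$ manipulations are valid without first knowing that $\overline F(x)x^\alpha/A(x)$ is bounded. In step (2) one divides an inequality of the form $(\text{something})_x \cdot c_z \ge (\text{term converging to } B(z))_x + o(1)$ and takes $\limsup$ in $x$; this is fine and gives $\limsup(\text{something})_x \ge B(z)/c_z$. Symmetrically step (3) controls the $\liminf$ from above. Only after both one-sided bounds are in hand — each finite and equal to $\xi$ in the limit $z\downarrow 1$ — does one obtain finiteness and the actual limit. A secondary point worth a line of care: the hypotheses are stated for $z>1$, and sending $z\downarrow 1$ through the ratios $B(z)/(z^\alpha-1)$ and $C(z)/(1-z^{-\alpha})$ uses exactly the assumed convergence to $\xi/\alpha$, so no additional regularity (e.g. no appeal to a Uniform Convergence Theorem) is needed here.
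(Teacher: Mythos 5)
Your sandwich inequality and overall strategy are exactly those of the paper's proof: bound $W(zx)-W(x)=\int_x^{zx}y^{\alpha-1}\overline F(y)\,dy$ above by $\overline F(x)x^{\alpha}(z^{\alpha}-1)/\alpha$ and below by $\overline F(zx)(zx)^{\alpha}(1-z^{-\alpha})/\alpha$, divide by $A(x)$ and $A(zx)$ respectively, and let $z\downarrow1$. However, there is a genuine logical error in how you extract the one-sided bounds, and with the bounds as you state them the conclusion does not follow. From the upper bound you have, for every $x$,
$$
\frac{W(zx)-W(x)}{A(x)}\;\le\;\frac{\overline F(x)x^{\alpha}}{A(x)}\cdot\frac{z^{\alpha}-1}{\alpha},
$$
and since the left side \emph{converges} to $B(z)$, the correct (and needed) consequence is
$\liminf_{x\to\infty}\overline F(x)x^{\alpha}/A(x)\ \ge\ \alpha B(z)/(z^{\alpha}-1)$ --- a lower bound on the \emph{liminf}, not merely on the limsup. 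Symmetrically, the lower bound of the sandwich divided by $A(zx)$ gives
$\limsup_{u\to\infty}\overline F(u)u^{\alpha}/A(u)\ \le\ \alpha C(z)/(1-z^{-\alpha})$ --- an upper bound on the \emph{limsup}. Letting $z\downarrow1$ then yields $\xi\le\liminf\le\limsup\le\xi$ and the limit exists. What you actually assert is only $\limsup\ge\xi$ and $\liminf\le\xi$, i.e.\ $\liminf\le\xi\le\limsup$, which is vacuous (it is consistent with $\liminf=0$ and $\limsup=\infty$) and cannot yield the limit by "combining the two bounds." You even reinforce the wrong direction in your discussion of the anticipated obstacle ("a lower bound on $\liminf$ --- or rather \ldots\ pins down $\limsup\ge\xi$" and "step (3) controls the $\liminf$ from above"), so this is not a typo but a misreading of which tail the inequality controls. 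The repair is one line: if $u_x\le v_x$ for all large $x$ and $u_x\to\ell$, then $\liminf v_x\ge\ell$ (dually, $v_x\le u_x$ gives $\limsup v_x\le\ell$). With that correction your argument coincides with the paper's.
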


\begin{proof}
For $z>1$, we have $W_{\alpha }(zx)-W_{\alpha }(x)=\int_{x}^{zx}y^{\alpha -1}%
\overline{F}(y)dy$. Since $\overline{F}(x)$ is nonincreasing, we find that%
$$
\frac{1}{\alpha }\overline{F}(xz)t^{\alpha }(z^{\alpha }-1)\leq W_{\alpha
}(zx)-W_{\alpha }(x)\leq \frac{1}{\alpha }\overline{F}(x)x^{\alpha
}(z^{\alpha }-1)\textrm{,} 
$$
and, then, also that%
\begin{eqnarray*}
\frac{W_{\alpha }(zx)-W_{\alpha }(x)}{A(x)} &\leq &\frac{x^{\alpha
}\overline{F}(x)}{A(x)}\frac{1}{\alpha }(z^{\alpha }-1)\textrm{,} \\
\frac{(xz)^{\alpha }\overline{F}(xz)}{A(xz)}\frac{1}{\alpha }%
z^{-\alpha }(z^{\alpha }-1) &\leq &\frac{W_{\alpha }(zx)-W_{\alpha }(x)}{%
A(zx)}\textrm{.}
\end{eqnarray*}%
Taking limits, we obtain that%
\begin{eqnarray*}
\alpha \frac{B(z)}{z^{\alpha }-1} &\leq &\liminf_{x\to\infty} \frac{\overline{F%
}(x)x^{\alpha }}{A(x)}\textrm{,} \\
\limsup_{x\to\infty} \frac{(xz)^{\alpha }\overline{F}(xz)}{A(xz)} &\leq
&\alpha \frac{C(z)}{1-z^{-\alpha }}\textrm{,}
\end{eqnarray*}%
or 
$$
\alpha \frac{B(z)}{z^{\alpha }-1}\leq \lim \binom{\sup }{\inf }%
 \frac{\overline{F}(x)x^{\alpha }}{W_{\alpha }(x)}\leq \alpha \frac{C(z)%
}{1-z^{-\alpha }}\textrm{.} 
$$
Now, let $z\downarrow 1$ to find, by hypothesis, that $x^{\alpha }\overline{F}(x)\big/A(x)\sim \xi $.
\end{proof}

\begin{remark}\label{remark20210716}
If $W_{\alpha }(\infty)<\infty$, we have
$\overline{W}_{\alpha }(x)-\overline{W}_{\alpha }(xz)=W_{\alpha }(xz)-W_{\alpha }(x)$.
Hence, we can again get Proposition \ref{proposition20210716}, but taking $\overline{W}_{\alpha }(x)$ instead of ${W}_{\alpha }(x)$.
\end{remark}

\subsubsection{Relations among $\overline{F}(x)$, $W_{\protect\alpha }(x)$
and $\overline{G}_{\alpha}(x)$}


\begin{lemma}\label{lemma1}
Let $\alpha>0$.
We have:

\begin{itemize}
	\item[(i)] Suppose that $0<\theta \leq \alpha $. We have $\overline{F}(x)\in
RV_{\theta -\alpha }(x)$ iff $\overline{G}_{\alpha}(x)\in RV_{\theta -\alpha }$ iff $%
W_{\alpha }(x)\in RV_{\theta }$. Each of the statements implies that $%
\overline{F}(x)\big/\overline{G}_{\alpha}(x)\sim \theta \big/\alpha $.

	\item[(ii)] $\overline{G}_{\alpha}(x)\in RV_{-\alpha }$ iff $
W_{\alpha }(x)\in RV_{0}$. Each of the statements implies that $\overline{F}(x)\big/\overline{G}%
_{\alpha}(x)\sim 0$. 

	\item[(iii)] If $\overline{F}(x)\big/\overline{G}_{\alpha}(x)\sim \lambda$, $0\leq
\lambda < 1$, then 
$\overline{G}_{\alpha}\in RV_{\alpha (\lambda -1)}$.

	\item[(iv)] Let $L(x)\in RV_{0}$. Then $x^{\alpha }\overline{G}_{\alpha}(x)\in \Pi _{\beta
}(L)$ iff $x^{\alpha }\overline{F}(x)\big/L(x)\sim \lambda $ with $\beta =\alpha \lambda $%
.

\end{itemize}
\end{lemma}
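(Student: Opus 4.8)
The backbone of all four parts is the identity $x^{\alpha}\overline{G}_{\alpha}(x)=\alpha W_{\alpha}(x)$ from Proposition \ref{propg}(ii), together with the fact that $W_{\alpha}$ is absolutely continuous with $W_{\alpha}'(x)=x^{\alpha-1}\overline{F}(x)$ a.e. Since multiplying a function by $x^{\alpha}$ shifts its regular-variation index by $\alpha$, this identity makes statements (b) and (c) equivalent in parts (i)--(ii) and turns $\Pi$-membership of $x^{\alpha}\overline{G}_{\alpha}$ into $\Pi$-membership of $W_{\alpha}$ in (iv). For the remaining equivalences I plan to go ``forward'' (from an $RV$ hypothesis on $\overline{F}$ to one on $W_{\alpha}$) by applying Karamata's theorem, Proposition \ref{karamatarv}, to the locally integrable function $U(x)=x^{\alpha-1}\overline{F}(x)$, and to go ``backward'' (from $W_{\alpha}$ to $\overline{F}$) by appealing to Proposition \ref{proposition20210716}, whose monotonicity sandwiching is exactly suited to this purpose.

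\textbf{Parts (i) and (ii).} For (i), let $0<\theta\le\alpha$. If $\overline{F}\in RV_{\theta-\alpha}$ then $U\in RV_{\theta-1}$ with $\theta-1>-1$, so \eqref{k1} gives $x^{\alpha}\overline{F}(x)=xU(x)\sim\theta\int_{0}^{x}U(t)\,dt=\theta W_{\alpha}(x)$; as the left side lies in $RV_{\theta}$, so does $W_{\alpha}$. Conversely, if $W_{\alpha}\in RV_{\theta}$ I apply Proposition \ref{proposition20210716} with $A=W_{\alpha}$: here $(W_{\alpha}(zx)-W_{\alpha}(x))/W_{\alpha}(x)\to z^{\theta}-1=:B(z)$ and $(W_{\alpha}(zx)-W_{\alpha}(x))/W_{\alpha}(zx)\to 1-z^{-\theta}=:C(z)$, while $B(z)/(z^{\alpha}-1)\to\theta/\alpha$ and $C(z)/(1-z^{-\alpha})\to\theta/\alpha$ as $z\downarrow1$; hence $\xi=\theta$ and $x^{\alpha}\overline{F}(x)/W_{\alpha}(x)\to\theta$, so $x^{\alpha}\overline{F}(x)\sim\theta W_{\alpha}(x)\in RV_{\theta}$ and $\overline{F}\in RV_{\theta-\alpha}$. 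In every case $x^{\alpha}\overline{F}(x)\sim\theta W_{\alpha}(x)=(\theta/\alpha)x^{\alpha}\overline{G}_{\alpha}(x)$, which yields $\overline{F}/\overline{G}_{\alpha}\to\theta/\alpha$. Part (ii) is the $\theta=0$ case of the backward step: $W_{\alpha}\in RV_{0}\Leftrightarrow\overline{G}_{\alpha}\in RV_{-\alpha}$, and Proposition \ref{proposition20210716} with $A=W_{\alpha}$ (now $B\equiv C\equiv 0$, so $\xi=0$) gives $x^{\alpha}\overline{F}(x)=o(W_{\alpha}(x))$, i.e. $\overline{F}/\overline{G}_{\alpha}\to 0$; the forward implication is absent here because $U\in RV_{-1}$ is the borderline index excluded from Proposition \ref{karamatarv}.

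\textbf{Part (iii).} This is the delicate point, for the same borderline-index reason. Put $\phi(x)=x^{\alpha}\overline{G}_{\alpha}(x)=\alpha W_{\alpha}(x)$, which for large $x$ is positive, nondecreasing and absolutely continuous with $\phi'(x)=\alpha x^{\alpha-1}\overline{F}(x)$ a.e. Since $x^{\alpha-1}\overline{G}_{\alpha}(x)=x^{-1}\phi(x)$, the hypothesis $\overline{F}(x)\sim\lambda\overline{G}_{\alpha}(x)$ (read as $\overline{F}(x)=o(\overline{G}_{\alpha}(x))$ when $\lambda=0$) gives $x\phi'(x)/\phi(x)\to\alpha\lambda$. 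Integrating, for $t>1$,
$$
\log\frac{\phi(tx)}{\phi(x)}=\int_{x}^{tx}\frac{\phi'(s)}{\phi(s)}\,ds=\int_{1}^{t}\frac{x\phi'(ux)}{\phi(ux)}\,du\longrightarrow\int_{1}^{t}\frac{\alpha\lambda}{u}\,du=\alpha\lambda\log t ,
$$
the passage to the limit being justified by dominated convergence: the integrand is nonnegative and, since $s\phi'(s)/\phi(s)\to\alpha\lambda$, it is bounded by a fixed constant on $u\in[1,t]$ once $x$ is large. Hence $\phi\in RV_{\alpha\lambda}$, that is $\overline{G}_{\alpha}\in RV_{\alpha\lambda-\alpha}=RV_{\alpha(\lambda-1)}$. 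I expect this substitution of the logarithmic-integral representation of $\phi$ for the unavailable Karamata theorem at index $-1$ to be the main obstacle; the only genuine care is the $[1,t]$-uniform bound on $x\phi'(ux)/\phi(ux)$, which follows from the convergence $s\phi'(s)/\phi(s)\to\alpha\lambda$.

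\textbf{Part (iv).} As $x^{\alpha}\overline{G}_{\alpha}=\alpha W_{\alpha}$ and $\Pi_{\beta}(L)$ is stable under multiplication by $\alpha$, the claim reduces to $W_{\alpha}\in\Pi_{\lambda}(L)\Leftrightarrow x^{\alpha}\overline{F}(x)/L(x)\to\lambda$. For ``$\Leftarrow$'': write $s^{\alpha-1}\overline{F}(s)=s^{-1}L(s)(\lambda+o(1))$ and substitute $s=ux$ to get $(W_{\alpha}(tx)-W_{\alpha}(x))/L(x)=\int_{1}^{t}u^{-1}(L(ux)/L(x))(\lambda+o(1))\,du$, which tends to $\lambda\log t$ by the uniform convergence $L(ux)/L(x)\to1$ on $u\in[1,t]$ valid for $L\in RV_{0}$. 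For ``$\Rightarrow$'': apply Proposition \ref{proposition20210716} with $A=L$; then $(W_{\alpha}(zx)-W_{\alpha}(x))/L(x)\to\lambda\log z=:B(z)$ and, since $L(x)/L(zx)\to1$, also $(W_{\alpha}(zx)-W_{\alpha}(x))/L(zx)\to\lambda\log z=:C(z)$, while $B(z)/(z^{\alpha}-1)\to\lambda/\alpha$ and $C(z)/(1-z^{-\alpha})\to\lambda/\alpha$ as $z\downarrow1$; hence $\xi=\lambda$ and $x^{\alpha}\overline{F}(x)/L(x)\to\lambda$. This closes all the equivalences of the lemma.
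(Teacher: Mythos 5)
Your proposal is correct throughout, and for parts (i), (ii) and (iv) it follows essentially the paper's own route: the identity $x^{\alpha}\overline{G}_{\alpha}(x)=\alpha W_{\alpha}(x)$ for the (b)$\Leftrightarrow$(c) equivalences, Karamata's direct half \eqref{k1} applied to $y^{\alpha-1}\overline{F}(y)$ for the forward implications, and Proposition \ref{proposition20210716} with exactly the same choices of $A$, $B$, $C$, $\xi$ for the converses (including the integral computation $\int_{1}^{z}y^{-1}\,\frac{(xy)^{\alpha}\overline{F}(xy)}{L(xy)}\frac{L(xy)}{L(x)}\,dy\to\lambda\log z$ in (iv)). Where you genuinely diverge is part (iii). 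The paper uses the inversion formula $\overline{F}(x)=\overline{G}_{\alpha}(x)-xG_{\alpha}'(x)/\alpha$ to get $xG_{\alpha}'(x)/\overline{G}_{\alpha}(x)\to\alpha(1-\lambda)$, then applies L'H\^opital to the ratio $\overline{G}_{\alpha}(x)\big/\int_{x}^{\infty}y^{-1}\overline{G}_{\alpha}(y)\,dy$ and invokes the converse half \eqref{k2} at the index $-\alpha(1-\lambda)-1<-1$. You instead work with $\phi=x^{\alpha}\overline{G}_{\alpha}=\alpha W_{\alpha}$, observe that the hypothesis is precisely $x\phi'(x)/\phi(x)\to\alpha\lambda$, and integrate the logarithmic derivative to get $\phi(tx)/\phi(x)\to t^{\alpha\lambda}$ directly — the classical Karamata-representation argument. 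Your version buys something concrete: it bypasses the L'H\^opital step, which in the paper tacitly requires first knowing that $\int_{x}^{\infty}y^{-1}\overline{G}_{\alpha}(y)\,dy$ is finite (this does follow from $xG_{\alpha}'/\overline{G}_{\alpha}\to\alpha(1-\lambda)>0$, but needs a word), and it treats $\lambda=0$ and $\lambda>0$ uniformly without ever touching the borderline index $-1$. The paper's version, in exchange, stays entirely inside the two halves of Proposition \ref{karamatarv} and the inversion formula of Proposition \ref{Newprop1}(ii), so it is more in keeping with the toolkit the rest of the paper is built on. Your one point of required care — the a.e.\ boundedness of $s\phi'(s)/\phi(s)$ on $[x,tx]$ needed to pass to the limit under the integral — is correctly identified and correctly dispatched.
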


\begin{proof}
Proof of (i). Consider $\overline{G}_{\alpha}(x)=\alpha x^{-\alpha
}\int_{0}^{x}y^{\alpha -1}\overline{F}(y)dy$ because of Proposition \ref{propg}~(ii).
This implies that $\overline{G}_{\alpha}(x)\in RV_{\theta -\alpha }$ iff $%
W_{\alpha }(x)\in RV_{\theta }$.
Next, assume that $\overline{F}(x)\in RV_{\theta -\alpha }$, $0<\theta \leq
\alpha $. Consider again $\overline{G}_{\alpha}(x)=\alpha x^{-\alpha
}\int_{0}^{x}y^{\alpha -1}\overline{F}(y)dy$ and note that $\alpha -1+\theta -\alpha
=\theta -1>-1$. We have, by applying (\ref{k1}), $\overline{G}_{\alpha}(x)\sim \alpha \overline{F}%
(x)\big/\theta $. Hence, $\overline{G}_{\alpha}(x)\in RV_{\theta -\alpha }$.
Also, we have that $\overline{F}(x)\in RV_{\theta -\alpha }$ implies that
$\overline{G}_{\alpha}(x)\big/\overline{F}%
(x)\sim \alpha \big/\theta $.

Now, assume that $W_{\alpha }(x)\in RV_{\theta }$ and $0<\theta \leq
\alpha $.
By applying Proposition \ref{proposition20210716} with $A(x)=W_{\alpha }(x)$, $B(x)=x^{\theta}-1$, $C(x)=1-x^{-\theta}$, and $\xi=\theta$, we
find that $x^{\alpha }\overline{F}(x)\big/W_{\alpha
}(x)\sim \theta $ and, hence, $\overline{F}\in RV_{\theta -\alpha }$.

Proof of (ii). 
Consider again $\overline{G}_{\alpha}(x)=\alpha x^{-\alpha
}\int_{0}^{x}y^{\alpha -1}\overline{F}(y)dy=\alpha x^{-\alpha
}W_{\alpha}(x)$.
Then, it follows that $\overline{G}_{\alpha}(x)\in RV_{-\alpha }$ iff $
W_{\alpha }(x)\in RV_{0}$.
Assume that $%
W_{\alpha }(x)\in RV_{0}$.
By applying Proposition \ref{proposition20210716} with $A(x)=W_{\alpha }(x)$, $B(x)=0$, $C(x)=0$, and $\xi=0$, we get
$x^{\alpha }\overline{F}(x)\big/W_{\alpha
}(x)\sim 0$ and, thus, $\overline{F}(x)\big/\overline{G}_{\alpha}(x)\sim 0$.

Proof of (iii). Assume $\overline{F}(x)\big/\overline{G}_{\alpha}(x)\sim \lambda$, $0\leq
\lambda < 1$. By Proposition \ref{Newprop1}, 
we have that $\overline{F}(x)=\overline{G}_{\alpha}(x)-xG_{\alpha}^{\prime
}(x)\big/\alpha $. 
We get
\begin{equation*}
\frac{xG_{\alpha}^{\prime }(x)}{\overline{G}_{\alpha}(x)}=\alpha \frac{\overline{G}%
_{\alpha}(x)-\overline{F}(x)}{\overline{G}_{\alpha}(x)}\sim \alpha (1-\lambda) \textrm{.}
\end{equation*}%
Hence, by applying L'Hopital rule, we have
$$
\frac{\overline{G}_{\alpha}(x)}{\int_x^{\infty} y^{-1}\overline{G}_{\alpha}(y)dy}\sim \alpha (1-\lambda
)\textrm{.} 
$$
Then, noticing $-\alpha (1-\lambda
)-1<-1$, by applying (\ref{k2}), we have $x^{-1}\overline{G}_{\alpha}(x)\in RV_{-\alpha (1-\lambda
)-1}$.
This result is equivalent to $\overline{G}_{\alpha}(x)\in RV_{\alpha (\lambda -1)}$.

Proof of (iv). First, assume that $x^{\alpha }\overline{F}(x)\big/L(x)\sim \lambda $%
. Using $W_{\alpha }(x)$, in the proof of Proposition \ref{proposition20210716}
we showed that for $z>1$, we have%
\begin{equation*}
W_{\alpha }(zx)-W_{\alpha }(x)=\int_{x}^{zx}y^{\alpha -1}\overline{F}(y)dy%
\textrm{.} 
\end{equation*}%
Then, we find that%
\begin{eqnarray*}
\frac{W_{\alpha }(zx)-W_{\alpha }(x)}{L(x)} &=&\int_{x}^{zx}y^{-1}\frac{%
y^{\alpha }\overline{F}(y)}{L(x)}dy \\
&=&\int_{1}^{z}y^{-1}\frac{(xy)^{\alpha }\overline{F}(xy)}{L(xy)}\frac{L(xy)%
}{L(x)}dy \\
&\sim &\lambda \log z\textrm{.}
\end{eqnarray*}%
It follows that $W_{\alpha }(x)\in \Pi _{\lambda }(L)$.
Because of
$$
(zx)^{\alpha}\overline{G}_{\alpha }(zx)-x^{\alpha}\overline{G}_{\alpha }(x)=
\alpha\left(W_{\alpha }(zx)- W_{\alpha }(x)\right)\textrm{,}
$$
we also have $x^{\alpha}\overline{G}%
_{\alpha}(x)\in \Pi _{\beta }(L)$ with $\beta =\alpha \lambda $.


For the converse, noting that $L(zx)\big/ L(x)\sim1$ for any $z>0$,
by applying Proposition \ref{proposition20210716} with $A(x)=L(x)$, $B(x)=C(x)=\lambda \log x$, and $\xi=\lambda$, we
find that $\overline{F}(x)x^{\alpha
}\big/L(x)\sim \lambda $.
\end{proof}


\begin{remark}\label{remarks1}
%
If $W_{\alpha }(\infty )<\infty $, we have%
\begin{equation*}
\overline{W}_{\alpha }(x)\equiv W_{\alpha }(\infty )-W_{\alpha
}(x)=\int_{x}^{\infty }y^{\alpha -1}\overline{F}(y)dy\textrm{.} 
\end{equation*}%
As to $\overline{G}_{\alpha}(x)$, we have%
\begin{equation*}
x^{-\alpha }m(\alpha )-\overline{G}_{\alpha}(x)=\alpha x^{-\alpha }\overline{W}%
_{\alpha }(x)\textrm{.} 
\end{equation*}%

\end{remark}

\begin{lemma}\label{lemma1bis}
Let $\alpha>0$.
Suppose that $W_{\alpha }(\infty )<\infty $ and $\theta >\alpha $. 
The following are equivalent:
\begin{itemize}
	\item[(i)] $\overline{F}(x)\in RV_{-\theta }$;
	\item[(ii)] $%
\overline{W}_{\alpha }(x)\in RV_{\alpha -\theta }$;
	\item[(iii)] $x^{-\alpha }m(\alpha )-%
\overline{G}_{\alpha}(x)\in RV_{-\theta }$;
	\item[(iv)] $\overline{W}_{\alpha }(x)\big/(x^{\alpha }\overline{F}(x)) \sim %
1\big/(\theta -\alpha )$; and,
	\item[(v)] $(x^{-\alpha }m(\alpha )-\overline{G}_{\alpha}(x))\big/\overline{F}(x)
\sim \alpha \big/(\theta -\alpha )$.
\end{itemize}
\end{lemma}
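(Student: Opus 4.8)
The plan is to follow the architecture of the proof of Lemma \ref{lemma1}, with the two main tools replaced by their ``finite total mass'' counterparts: Karamata's relation (\ref{k1}) by (\ref{k2}), and Proposition \ref{proposition20210716} by its variant noted in Remark \ref{remark20210716}. Throughout, I would use Remark \ref{remarks1}, namely $\overline{W}_\alpha(x)=\int_x^\infty y^{\alpha-1}\overline{F}(y)\,dy$ and $x^{-\alpha}m(\alpha)-\overline{G}_\alpha(x)=\alpha x^{-\alpha}\overline{W}_\alpha(x)$, to trade all assertions about $\overline{G}_\alpha$ for assertions about $\overline{W}_\alpha$. In particular, multiplying or dividing by the $RV_{-\alpha}$ factor $x^{-\alpha}$ makes (ii) $\Longleftrightarrow$ (iii) and (iv) $\Longleftrightarrow$ (v) immediate, so the work reduces to proving (i) $\Longleftrightarrow$ (ii) $\Longleftrightarrow$ (iv).

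First I would set $U(x)=x^{\alpha-1}\overline{F}(x)$, so that $\int_x^\infty U(t)\,dt=\overline{W}_\alpha(x)<\infty$ by hypothesis. If (i) holds, then $U\in RV_{\alpha-1-\theta}$ with $\alpha-1-\theta<-1$ (as $\theta>\alpha$), and (\ref{k2}) gives $x^\alpha\overline{F}(x)=xU(x)\sim(\theta-\alpha)\overline{W}_\alpha(x)$; this is precisely (iv), and it also yields $\overline{W}_\alpha(x)\sim x^\alpha\overline{F}(x)/(\theta-\alpha)\in RV_{\alpha-\theta}$, which is (ii). Conversely, if (iv) holds, then $xU(x)\sim(\theta-\alpha)\int_x^\infty U(t)\,dt$, and the ``only if'' direction of (\ref{k2}) (with index $\alpha-1-\theta<-1$) forces $U\in RV_{\alpha-1-\theta}$, i.e. $\overline{F}\in RV_{-\theta}$, which is (i). Thus (i) $\Longleftrightarrow$ (iv) and (i) $\Longrightarrow$ (ii).

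It remains to prove (ii) $\Longrightarrow$ (iv), and this is the step I expect to demand the most care. Assuming $\overline{W}_\alpha\in RV_{\alpha-\theta}$, I would apply Proposition \ref{proposition20210716} through Remark \ref{remark20210716}: since $W_\alpha(zx)-W_\alpha(x)=\overline{W}_\alpha(x)-\overline{W}_\alpha(zx)$ for $z>1$, taking $A(x)=\overline{W}_\alpha(x)$ gives, as $x\to\infty$,
$$
\frac{W_\alpha(zx)-W_\alpha(x)}{A(x)}\to 1-z^{\alpha-\theta}=:B(z),\qquad
\frac{W_\alpha(zx)-W_\alpha(x)}{A(zx)}\to z^{\theta-\alpha}-1=:C(z).
$$
Expanding as $z\downarrow1$ gives $B(z)/(z^\alpha-1)\to(\theta-\alpha)/\alpha$ and $C(z)/(1-z^{-\alpha})\to(\theta-\alpha)/\alpha$, so the hypotheses of Proposition \ref{proposition20210716} hold with $\xi=\theta-\alpha$, and we conclude $x^\alpha\overline{F}(x)/\overline{W}_\alpha(x)\to\theta-\alpha$, i.e. (iv). The delicate points are: getting the sign in $W_\alpha(zx)-W_\alpha(x)=\overline{W}_\alpha(x)-\overline{W}_\alpha(zx)$, and hence in $B$ and $C$, right; checking that the monotonicity sandwich used inside the proof of Proposition \ref{proposition20210716} survives the passage to the truncated tail $\overline{W}_\alpha$ (which is exactly what Remark \ref{remark20210716} asserts); and the standing assumption that $\overline{F}$, and therefore $\overline{W}_\alpha$, is strictly positive, so that all the ratios above are well defined. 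With (ii) $\Longrightarrow$ (iv) in hand, the chain (i) $\Longrightarrow$ (ii) $\Longrightarrow$ (iv) $\Longrightarrow$ (i), together with (ii) $\Longleftrightarrow$ (iii) and (iv) $\Longleftrightarrow$ (v), closes the equivalence.
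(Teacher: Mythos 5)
Your proposal is correct and follows essentially the same route as the paper: (\ref{k2}) applied to $U(x)=x^{\alpha-1}\overline{F}(x)$ for (i) $\Leftrightarrow$ (iv) and (i) $\Rightarrow$ (ii), Remark \ref{remark20210716} with $A(x)=\overline{W}_\alpha(x)$ for the reverse direction from (ii), and the identity $x^{-\alpha}m(\alpha)-\overline{G}_\alpha(x)=\alpha x^{-\alpha}\overline{W}_\alpha(x)$ to absorb (iii) and (v). Your sign bookkeeping $B(z)=1-z^{\alpha-\theta}$, $C(z)=z^{\theta-\alpha}-1$ is in fact the correct one (the paper's stated $B$ and $C$ carry a harmless sign slip), so nothing further is needed.
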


\begin{proof}
Assume that $\theta >\alpha $.

Proof of (i) $\Longrightarrow$ (ii).
First, assume that $\overline{F}(x)\in RV_{-\theta }$.
Note that $\overline{W}_{\alpha}(\infty)<\infty$.
Hence, because of $\overline{W}_{\alpha}(x)=\int_x^{\infty}y^{\alpha-1}\overline{F}(y)dy$, $x^{\alpha-1}\overline{F}(x)\in RV_{\alpha-1-\theta }$ and $\alpha-1-\theta<-1$,
we have, by applying (\ref{k2}),
$$
\frac{x^{\alpha}\overline{F}(x)}{\overline{W}_{\alpha}(x)}=\frac{x^{\alpha}\overline{F}(x)}{\int_x^{\infty}y^{\alpha-1}\overline{F}(y)dy}
\sim\theta-\alpha\textrm{.}
$$
This relation also implies that (i) $\Longrightarrow$ (iv).

Proof of (iv) $\Longrightarrow$ (i).
Now, assume that $\overline{W}_{\alpha }(x)\big/(x^{\alpha }\overline{F}(x)) \sim %
1\big/(\theta -\alpha )$.
Then, noting that $-(\theta -\alpha+1)=-\theta +\alpha-1<-1$ and
$$
\frac{x^{\alpha}\overline{F}(x)}{\overline{W}_{\alpha}(x)}=\frac{x^{\alpha}\overline{F}(x)}{\int_x^{\infty}y^{\alpha-1}\overline{F}(y)dy}\textrm{,}
$$
by applying  (\ref{k2}), give $x^{\alpha-1}\overline{F}(x)\in RV_{\alpha-1-\theta }$.
Hence, we have $\overline{F}(x)\in RV_{-\theta }$.

Proof of (ii) $\Longrightarrow$ (i).
Now, assume that $%
\overline{W}_{\alpha }(x)\in RV_{\alpha -\theta }$.
By applying Remark \ref{remark20210716} with $A(x)=\overline{W}_{\alpha }(x)$, $B(x)=x^{\alpha -\theta}-1$, $C(x)=1-x^{-(\alpha -\theta)}$, and $\xi=\theta-\alpha$, we
get
$x^{\alpha }\overline{F}(x)\big/\overline{W}_{\alpha }(x)\sim \theta-\alpha $ and, hence, $\overline{F}\in RV_{-\theta}$.

Proof of (i) $\Longleftrightarrow$ (iii).
Previously, we proved that (i) $\Longleftrightarrow$ (ii), i.e. $\overline{F}(x)\in RV_{-\theta }$ $\Longleftrightarrow$ $%
\overline{W}_{\alpha }(x)\in RV_{\alpha -\theta }$.
From 
Remark \ref{remarks1}, we have $x^{-\alpha }m(\alpha )-%
\overline{G}_{\alpha}(x)=\alpha x^{-\alpha}\overline{W}_{\alpha }(x)$.
Hence, we have clearly that $\overline{F}(x)\in RV_{-\theta }$ $\Longleftrightarrow$ $x^{-\alpha }m(\alpha )-%
\overline{G}_{\alpha}(x)\in RV_{-\theta }$.

Proof of (i) $\Longleftrightarrow$ (iv) $\Longleftrightarrow$ (v).
Again, considering
$x^{-\alpha }m(\alpha )-%
\overline{G}_{\alpha}(x)=\alpha x^{-\alpha}\overline{W}_{\alpha }(x)$,
we have
$$
\frac{x^{-\alpha }m(\alpha )-%
\overline{G}_{\alpha}(x)}{\overline{F}(x)}=
\alpha \frac{x^{-\alpha}\overline{W}_{\alpha }(x)}{\overline{F}(x)}
=\alpha \frac{\int_x^{\infty}y^{\alpha-1}\overline{F}(y)dy}{x^{\alpha}\overline{F}(x)}\textrm{.}
$$
Then, noticing that $-\theta+\alpha-1<-1$, we have that, by applying (\ref{k2}),
$$
\frac{x^{-\alpha }m(\alpha )-%
\overline{G}_{\alpha}(x)}{\overline{F}(x)}=
\alpha \frac{x^{-\alpha}\overline{W}_{\alpha }(x)}{\overline{F}(x)}=
\alpha \frac{\int_x^{\infty}y^{\alpha-1}\overline{F}(y)dy}{x^{\alpha}\overline{F}(x)}\sim\frac{\alpha}{\theta-\alpha}\textrm{,}
$$
is equivalent to $x^{\alpha-1}\overline{F}(x)\in RV_{-\theta+\alpha-1}$, i.e. $\overline{F}(x)\in RV_{-\theta}$.
The claim then follows.
\end{proof}


\subsubsection{Relations between $\overline{F}(x)$ and $H_{\protect\alpha %
}(x)$}


\begin{lemma}\label{lemma2}
Let $\alpha>0$.
We have:

\begin{itemize}
	\item[(i)] If $0<\theta <\alpha$, then $H_{\alpha }(x)\in RV_{\theta }$ iff $\overline{F}(x)\in
RV_{\theta -\alpha }$.

	\item[(ii)] If $0\leq \theta <\alpha $, then $H_{\alpha }(x)\in RV_{\theta }$
iff $x^{\alpha }\overline{F}(x)\big/H_{\alpha }(x)\sim \theta \big/(\alpha
-\theta )$.

	\item[(iii)] Let $L(x)\in RV_{0}$. Then $H_{\alpha }(x)\in \Pi _{\lambda }(L)$ iff $%
x^{\alpha }\overline{F}(x)\big/L(x)\sim \delta $ 
with $\lambda =\alpha \delta $.

\end{itemize}
\end{lemma}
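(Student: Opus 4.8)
The plan is to exploit the fundamental identity \eqref{3}, namely $H_{\alpha}(x)=\alpha W_{\alpha}(x)-x^{\alpha}\overline{F}(x)$, which couples the three quantities $H_{\alpha}$, $W_{\alpha}$ and $x^{\alpha}\overline{F}(x)$, together with Lemma~\ref{lemma1} and Proposition~\ref{karamatarv}. For part (i), suppose first that $\overline{F}(x)\in RV_{\theta-\alpha}$ with $0<\theta<\alpha$. By Lemma~\ref{lemma1}(i) this is equivalent to $W_{\alpha}(x)\in RV_{\theta}$, and moreover, applying \eqref{k1} to $W_{\alpha}(x)=\int_0^x y^{\alpha-1}\overline{F}(y)dy$ (legitimate since $\alpha-1+\theta-\alpha=\theta-1>-1$) gives $x^{\alpha}\overline{F}(x)\big/W_{\alpha}(x)\to\theta$. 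Substituting into \eqref{3} yields $H_{\alpha}(x)=\alpha W_{\alpha}(x)-x^{\alpha}\overline{F}(x)\sim(\alpha-\theta)W_{\alpha}(x)$, so $H_{\alpha}(x)\in RV_{\theta}$. Conversely, if $H_{\alpha}(x)\in RV_{\theta}$, I would apply \eqref{k1} directly to $H_{\alpha}(x)=\int_0^x y^{\alpha}dF(y)$ viewed as a Stieltjes integral, or more safely pass through the integral $\int_0^x y^{\alpha-1}F(y)\,dy$ and an integration-by-parts argument, to recover control on $\overline{F}$; the cleanest route is probably to combine \eqref{3} with part (ii), proving (ii) first, and then noting that $H_{\alpha}\in RV_\theta$ together with the asymptotic from (ii) forces $x^{\alpha}\overline{F}(x)\in RV_{\theta}$, hence $\overline{F}(x)\in RV_{\theta-\alpha}$.

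For part (ii), the forward direction is the same computation as above but now only requiring $0\le\theta<\alpha$ and giving the stated ratio: from $\overline{F}\in RV_{\theta-\alpha}$ (or, when $\theta=0$, from $W_{\alpha}\in RV_0$ via Lemma~\ref{lemma1}(ii)) one gets $x^{\alpha}\overline{F}(x)\big/W_{\alpha}(x)\to\theta$ and hence, dividing \eqref{3} by $H_{\alpha}(x)\sim(\alpha-\theta)W_{\alpha}(x)$, the limit $x^{\alpha}\overline{F}(x)\big/H_{\alpha}(x)\to\theta/(\alpha-\theta)$. For the converse, assume $x^{\alpha}\overline{F}(x)\big/H_{\alpha}(x)\to\theta/(\alpha-\theta)$. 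Rewriting \eqref{3} as $\alpha W_{\alpha}(x)=H_{\alpha}(x)+x^{\alpha}\overline{F}(x)$ and dividing by $H_{\alpha}(x)$ gives $\alpha W_{\alpha}(x)\big/H_{\alpha}(x)\to\alpha/(\alpha-\theta)$, i.e. $W_{\alpha}(x)\big/H_{\alpha}(x)\to 1/(\alpha-\theta)$. Now differentiating/integrating: since $W_{\alpha}'(x)=x^{\alpha-1}\overline{F}(x)$, we have $xW_{\alpha}'(x)\big/W_{\alpha}(x)=x^{\alpha}\overline{F}(x)\big/W_{\alpha}(x)\to\theta$ (combining the two ratios), and then Proposition~\ref{karamatarv} \eqref{k1}, applied in the form that $xW_{\alpha}'(x)/W_{\alpha}(x)\to\theta$ implies $W_{\alpha}\in RV_\theta$ — or, to stay strictly within the quoted tools, apply Proposition~\ref{proposition20210716} with $A(x)=W_\alpha(x)$, $B(z)=z^\theta-1$, $C(z)=1-z^{-\theta}$, $\xi=\theta$, exactly as in the proof of Lemma~\ref{lemma1}(i) — yields $W_{\alpha}\in RV_{\theta}$, hence by Lemma~\ref{lemma1}(i)--(ii) $\overline{F}\in RV_{\theta-\alpha}$.

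For part (iii), assume first $x^{\alpha}\overline{F}(x)\big/L(x)\to\delta$ with $L\in RV_0$. By Lemma~\ref{lemma1}(iv) this is equivalent to $x^{\alpha}\overline{G}_{\alpha}(x)\in\Pi_{\beta}(L)$ with $\beta=\alpha\delta$, and, as recorded in that proof, also to $W_{\alpha}(x)\in\Pi_{\delta}(L)$. From \eqref{3}, $H_{\alpha}(x)=\alpha W_{\alpha}(x)-x^{\alpha}\overline{F}(x)$; since $x^{\alpha}\overline{F}(x)\sim\delta L(x)$ with $L\in RV_0$ and $\Pi_{\beta}(L)$ functions grow like $\beta L(x)\log x$, the term $x^{\alpha}\overline{F}(x)$ is negligible relative to the oscillation $L(x)\log x$ in the de Haan difference. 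Concretely, for $z>1$,
\begin{equation*}
\frac{H_{\alpha}(zx)-H_{\alpha}(x)}{L(x)}=\alpha\,\frac{W_{\alpha}(zx)-W_{\alpha}(x)}{L(x)}-\frac{(zx)^{\alpha}\overline{F}(zx)-x^{\alpha}\overline{F}(x)}{L(x)}\to \alpha\delta\log z - (\delta-\delta)=\lambda\log z,
\end{equation*}
using $L(zx)/L(x)\to1$, so $H_{\alpha}(x)\in\Pi_{\lambda}(L)$ with $\lambda=\alpha\delta$. Conversely, if $H_{\alpha}(x)\in\Pi_{\lambda}(L)$, then from $\alpha W_{\alpha}(x)=H_{\alpha}(x)+x^{\alpha}\overline{F}(x)$ I expect $W_{\alpha}(x)\in\Pi_{\lambda/\alpha}(L)$ once the $x^{\alpha}\overline{F}(x)$ contribution is controlled; to get that control, apply Proposition~\ref{proposition20210716} with $A(x)=L(x)$, $B(z)=C(z)=(\lambda/\alpha)\log z$ and $\xi=\lambda/\alpha$ — here the key input is that the de Haan difference $W_{\alpha}(zx)-W_{\alpha}(x)$, squeezed by monotonicity of $\overline{F}$ as in the proof of that proposition, has the same asymptotics against $L(x)$ and against $L(zx)$ — giving $x^{\alpha}\overline{F}(x)\big/L(x)\to\lambda/\alpha=\delta$. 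The main obstacle, I anticipate, is the converse direction of (iii): one must first show that $H_{\alpha}\in\Pi_{\lambda}(L)$ transfers to $W_{\alpha}\in\Pi_{\lambda/\alpha}(L)$ without already knowing the behaviour of $x^{\alpha}\overline{F}(x)$ — this is circular unless handled carefully — and the resolution is precisely to feed the monotone-squeeze bounds for $W_\alpha(zx)-W_\alpha(x)$ (already established inside the proof of Proposition~\ref{proposition20210716}) through identity \eqref{3}, so that the asymptotics of $H_\alpha(zx)-H_\alpha(x)$ alone pin down $\xi$. The other directions are routine manipulations of \eqref{3} combined with Lemma~\ref{lemma1} and Karamata's theorem.
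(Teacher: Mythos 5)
Your handling of every direction that goes \emph{from} $\overline{F}$ (or $W_{\alpha}$) \emph{towards} $H_{\alpha}$ is sound and close to the paper's: the implication $\overline{F}\in RV_{\theta-\alpha}\Rightarrow H_{\alpha}\in RV_{\theta}$ via (\ref{3}), the converse half of (ii), and the forward half of (iii) all work. The genuine gap lies in the directions that must extract information about $\overline{F}$ from a hypothesis on $H_{\alpha}$ alone, and your proposal does not close it. Concretely, to prove $H_{\alpha}\in RV_{\theta}\Rightarrow\overline{F}\in RV_{\theta-\alpha}$ you propose to ``prove (ii) first'' and read off $x^{\alpha}\overline{F}(x)\sim\frac{\theta}{\alpha-\theta}H_{\alpha}(x)$; but your proof of the forward half of (ii) starts from $\overline{F}\in RV_{\theta-\alpha}$ (or from $W_{\alpha}\in RV_{0}$), i.e.\ from exactly the conclusion you are trying to reach, so the argument is circular. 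The identity (\ref{3}), $H_{\alpha}(x)=\alpha W_{\alpha}(x)-x^{\alpha}\overline{F}(x)$, cannot be inverted by itself: knowing only that the left-hand side is in $RV_{\theta}$ says nothing a priori about how the right-hand side splits between its two terms. The same problem recurs, as you yourself anticipate, in the converse of (iii): the monotone-squeeze bounds for $W_{\alpha}(zx)-W_{\alpha}(x)$ fed through (\ref{3}) only control the increments $x^{\alpha}\bigl(\overline{F}(x)-\overline{F}(zx)\bigr)$, not $x^{\alpha}\overline{F}(x)$ itself, and Proposition \ref{proposition20210716} cannot be invoked because its hypotheses are statements about $W_{\alpha}(zx)-W_{\alpha}(x)$ normalized by $A(x)$ --- precisely what you do not yet know.

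The missing ingredient is the inversion formula of Proposition \ref{prop1}, $\overline{F}(x)=\alpha\int_{x}^{\infty}z^{-\alpha-1}H_{\alpha}(z)dz-x^{-\alpha}H_{\alpha}(x)$, which expresses $\overline{F}$ explicitly in terms of $H_{\alpha}$ and is the exact dual of (\ref{3}). With it, $H_{\alpha}\in RV_{\theta}$ and $\theta-\alpha-1<-1$ let (\ref{k2}) give $x^{-\alpha}H_{\alpha}(x)\big/\int_{x}^{\infty}z^{-\alpha-1}H_{\alpha}(z)dz\sim\alpha-\theta$, whence $\overline{F}(x)\big/(x^{-\alpha}H_{\alpha}(x))\sim\theta/(\alpha-\theta)$; this settles the forward half of (ii) for all $0\le\theta<\alpha$ and, for $\theta>0$, the missing half of (i). For the converse of (iii), the same formula rewritten as $x^{\alpha}\overline{F}(x)=\alpha\int_{1}^{\infty}y^{-\alpha-1}\bigl(H_{\alpha}(xy)-H_{\alpha}(x)\bigr)dy$, combined with the uniform-convergence theorem for $\Pi$-varying functions to pass the limit under the integral, yields $x^{\alpha}\overline{F}(x)/L(x)\to\lambda\alpha\int_{1}^{\infty}y^{-\alpha-1}\log y\,dy=\lambda/\alpha$. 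Without this representation (or an equivalent Tauberian step), the three ``hard'' directions of the lemma remain unproved.
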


\begin{proof}
(i) First, suppose that $H_{\alpha }(x)\in RV_{\theta }$ where $0\leq\theta
<\alpha $. In this case we have $x^{-\alpha -1}H(x)\in RV_{\theta -\alpha
-1} $ and, since $\theta -\alpha -1<-1$, (\ref{k2}) gives that%
\begin{equation*}
\frac{x^{-\alpha }H_{\alpha }(x)}{\int_{x}^{\infty }y^{-\alpha -1}H_{\alpha}
(y)dy}\sim \alpha -\theta \textrm{.}
\end{equation*}%
On the other hand, we have (\ref{5}): 
\begin{equation}\label{eq20210716}
\overline{F}(x)=\alpha \int_{x}^{\infty }y^{-\alpha -1}H_{\alpha
}(y)dy-x^{-\alpha }H_{\alpha }(x)\textrm{.} 
\end{equation}%
It follows that%
\begin{equation*}
\frac{\overline{F}(x)}{x^{-\alpha }H_{\alpha }(x)}\sim \frac{\alpha }{%
\alpha -\theta }-1=\frac{\theta }{\alpha -\theta }\textrm{.} 
\end{equation*}
It follows that $\overline{F}(x)\in RV_{\theta -\alpha }$ when $\theta >0$. 


Conversely, suppose that $\overline{F}(x)\in RV_{\theta -\alpha }$ and $%
0<\theta <\alpha $. Using (\ref{3}), we have%
\begin{equation*}
H_{\alpha }(x)=\alpha \int_{0}^{x}y^{\alpha -1}\overline{F}(y)dy-x^{\alpha }%
\overline{F}(x) \textrm{,}
\end{equation*}%
and, then, we find that%
\begin{equation*}
\frac{H_{\alpha }(x)}{x^{\alpha }\overline{F}(x)}\sim \frac{\alpha }{%
\theta }-1=\frac{\alpha -\theta }{\theta }\textrm{.} 
\end{equation*}%
Hence, $H_{\alpha }(x)\in RV_{\theta }$.

(ii) If $\theta \geq 0$, in (i) we proved that $H_{\alpha }(x)\in RV_{\theta
}$ implies that $x^{\alpha }\overline{F}(x)\big/H_{\alpha }(x)\sim \theta
\big/(\alpha -\theta )$. Now, we consider the converse. 
From (\ref{eq20210716}), we see that%
\begin{equation*}
\frac{\alpha \int_{x}^{\infty }z^{-\alpha -1}H_{\alpha }(z)dz}{x^{-\alpha
}H_{\alpha }(x)}\sim \frac{\theta }{\alpha -\theta }+1=\frac{\alpha }{%
\alpha -\theta }\textrm{.} 
\end{equation*}
Noting that $\theta-\alpha-1<-1$, we have, by applying (\ref{k2}), $x^{-\alpha -1}H_{\alpha }(x)\in RV_{\theta-\alpha-1}$.
We conclude that $H_{\alpha
}(x)\in RV_{\theta }$.

(iii) First, suppose that $x^{\alpha }\overline{F}(x)\big/L(x)\sim \delta $%
. Using (\ref{3}), we have%
\begin{eqnarray*}
H_{\alpha }(tx)-H_{\alpha }(x) &=&\alpha \int_{x}^{xt}y^{\alpha -1}\overline{%
F}(y)dy-(xt)^{\alpha }\overline{F}(xt)+x^{\alpha }\overline{F}(x) \\
&=&\alpha \int_{1}^{t}(xy)^{\alpha }\overline{F}(xy)y^{-1}dy-(xt)^{\alpha }%
\overline{F}(xt)+x^{\alpha }\overline{F}(x)\textrm{.}
\end{eqnarray*}%
Then, it follows that, by the dominated convergence theorem,%
\begin{eqnarray*}
\frac{H_{\alpha }(tx)-H_{\alpha }(x)}{L(x)} &=&\alpha \int_{1}^{t}\frac{%
(xy)^{\alpha }\overline{F}(xy)}{L(xy)}\frac{L(xy)}{L(x)}y^{-1}dy 
-\frac{(xt)^{\alpha }\overline{F}(xt)}{L(xt)}\frac{L(xt)}{L(x)}+\frac{%
x^{\alpha }\overline{F}(x)}{L(x)} \\
&\rightarrow &\alpha \delta \log t\textrm{.}
\end{eqnarray*}%
Hence, we get that $H_{\alpha}(x)\in \Pi _{\lambda }(L)$ with $\lambda =\alpha \delta $.

Conversely, suppose that $H_{\alpha}(x)\in \Pi _{\lambda }(L)$. Using (\ref{5}) we have 
\begin{eqnarray*}
x^{\alpha }\overline{F}(x) &=&\alpha \int_{1}^{\infty }y^{-\alpha
-1}H_{\alpha }(xy)dy-H_{\alpha }(x) \\
&=&\alpha \int_{1}^{\infty }y^{-\alpha -1}(H_{\alpha }(xy)-H_{\alpha }(x))dy\textrm{.}
\end{eqnarray*}%
Since $H_{\alpha}(x)\in \Pi _{\lambda }(L)$, we find that (cf. 
\cite{geluk}, Cor. 1.15)%
\begin{eqnarray*}
\frac{x^{\alpha }\overline{F}(x)}{L(x)} &=&\alpha \int_{1}^{\infty
}y^{-\alpha -1}\frac{H_{\alpha }(xy)-H_{\alpha }(x)}{L(x)}dy \\
&\sim &\lambda \alpha \int_{1}^{\infty }y^{-\alpha -1}\log ydy=\frac{%
\lambda }{\alpha }\textrm{.}
\end{eqnarray*}%
We conclude that $x^{\alpha }\overline{F}(x)\big/L(x)\sim \lambda \big/\alpha 
$. This proves the result.
\end{proof}


\begin{remark}\label{remarks2}
%
If $H_{\alpha }(\infty )<\infty $, then 
\begin{equation*}
m(\alpha )-H_{\alpha }(x)=\alpha \int_{x}^{\infty }y^{\alpha -1}\overline{F}%
(y)dy+x^{\alpha }\overline{F}(x)\textrm{.} 
\end{equation*}%
Using (\ref{5}), we also have%
\begin{equation*}
\overline{F}(x)=x^{-\alpha }(m(\alpha )-H_{\alpha }(x))-\alpha
\int_{x}^{\infty }z^{-\alpha -1}(m(\alpha )-H_{\alpha }(z))dz\textrm{.} 
\end{equation*}

\end{remark}



\begin{lemma}\label{lemma2bis}
Let $\alpha>0$.
Suppose that $m(\alpha )<\infty$ and $\theta >\alpha $. 
The following are equivalent:
\begin{itemize}
	\item[(i)] $\overline{F}(x)\in RV_{-\theta }$;
	\item[(ii)] $m(\alpha )-H_{\alpha }(x)\in RV_{\alpha -\theta }$; and,
	\item[(iii)] $(m(\alpha )-H_{\alpha }(x))\big/(x^{\alpha }\overline{F}(x))\sim 
\theta\big/(\theta -\alpha )$.
\end{itemize}
\end{lemma}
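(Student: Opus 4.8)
The plan is to reduce the lemma to the two explicit identities recorded in Remark \ref{remarks2} together with Karamata's theorem (Proposition \ref{karamatarv}), in the same spirit as the proof of Lemma \ref{lemma1bis}. First note that $m(\alpha)<\infty$ gives $H_{\alpha}(\infty)=m(\alpha)$, and, since (\ref{3}) reads $\alpha W_{\alpha}(x)=H_{\alpha}(x)+x^{\alpha}\overline{F}(x)$ and $x^{\alpha}\overline{F}(x)\le m(\alpha)-H_{\alpha}(x)\to 0$, also $W_{\alpha}(\infty)=m(\alpha)/\alpha<\infty$; hence both Remark \ref{remarks2} and Lemma \ref{lemma1bis} are available. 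Writing $\overline{W}_{\alpha}(x)=\int_{x}^{\infty}y^{\alpha-1}\overline{F}(y)dy$, I will use
$$
m(\alpha)-H_{\alpha}(x)=\alpha\,\overline{W}_{\alpha}(x)+x^{\alpha}\overline{F}(x)
$$
and
$$
\overline{F}(x)=x^{-\alpha}\bigl(m(\alpha)-H_{\alpha}(x)\bigr)-\alpha\int_{x}^{\infty}z^{-\alpha-1}\bigl(m(\alpha)-H_{\alpha}(z)\bigr)dz .
$$

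I would first prove (i) $\Longrightarrow$ (iii), which en route also yields (i) $\Longrightarrow$ (ii). If $\overline{F}(x)\in RV_{-\theta}$, then $y^{\alpha-1}\overline{F}(y)\in RV_{\alpha-1-\theta}$ with $\alpha-1-\theta<-1$ because $\theta>\alpha$, so (\ref{k2}) applied to $\overline{W}_{\alpha}$ gives $x^{\alpha}\overline{F}(x)\big/\overline{W}_{\alpha}(x)\sim\theta-\alpha$. Substituting into the first identity gives $m(\alpha)-H_{\alpha}(x)\sim\bigl(\alpha/(\theta-\alpha)+1\bigr)x^{\alpha}\overline{F}(x)=\theta/(\theta-\alpha)\cdot x^{\alpha}\overline{F}(x)$, which is (iii); and since the right-hand side lies in $RV_{\alpha-\theta}$, this is also (ii).

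Next I would close the chain with (ii) $\Longrightarrow$ (i) and (iii) $\Longrightarrow$ (i). For (ii) $\Longrightarrow$ (i), if $m(\alpha)-H_{\alpha}(x)\in RV_{\alpha-\theta}$ then $z^{-\alpha-1}(m(\alpha)-H_{\alpha}(z))\in RV_{-\theta-1}$ with index $<-1$, so (\ref{k2}) gives $\int_{x}^{\infty}z^{-\alpha-1}(m(\alpha)-H_{\alpha}(z))dz\sim x^{-\alpha}(m(\alpha)-H_{\alpha}(x))/\theta$, and the second identity then yields $\overline{F}(x)\sim\bigl((\theta-\alpha)/\theta\bigr)x^{-\alpha}(m(\alpha)-H_{\alpha}(x))\in RV_{-\theta}$. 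For (iii) $\Longrightarrow$ (i), rearranging the first identity and using (iii) gives $\alpha\,\overline{W}_{\alpha}(x)=(m(\alpha)-H_{\alpha}(x))-x^{\alpha}\overline{F}(x)\sim\bigl(\theta/(\theta-\alpha)-1\bigr)x^{\alpha}\overline{F}(x)$, i.e. $\overline{W}_{\alpha}(x)\big/(x^{\alpha}\overline{F}(x))\sim 1\big/(\theta-\alpha)$, which is exactly statement (iv) of Lemma \ref{lemma1bis}; since $W_{\alpha}(\infty)<\infty$, that lemma delivers $\overline{F}(x)\in RV_{-\theta}$.

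I do not expect a genuine obstacle: the whole argument is a bookkeeping exercise with Karamata's theorem applied to the identities of Remark \ref{remarks2}, and the only delicate points are checking that all indices entering (\ref{k2}) are strictly below $-1$ (which is precisely what $\theta>\alpha$ buys), absorbing the $o(\cdot)$ terms correctly when combining asymptotics, and recording at the outset that $m(\alpha)<\infty$ forces $H_{\alpha}(\infty)=m(\alpha)$ and $W_{\alpha}(\infty)<\infty$ so that Remark \ref{remarks2} and Lemma \ref{lemma1bis} may be invoked. If one preferred to avoid citing Lemma \ref{lemma1bis} for (iii) $\Longrightarrow$ (i), one could instead deduce from (iii) that $m(\alpha)-H_{\alpha}(x)\sim\theta/(\theta-\alpha)\cdot x^{\alpha}\overline{F}(x)$ and then run the same (\ref{k2})-inversion used for (ii) $\Longrightarrow$ (i), but routing through Lemma \ref{lemma1bis}(iv) is shorter.
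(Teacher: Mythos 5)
Your proposal is correct and follows essentially the same route as the paper: the paper also derives (i)~$\Rightarrow$~(iii)~$\Rightarrow$~(ii) from the integration-by-parts identity $m(\alpha)-H_{\alpha}(x)=x^{\alpha}\overline{F}(x)+\alpha\int_{x}^{\infty}y^{\alpha-1}\overline{F}(y)\,dy$ together with (\ref{k2}), and proves (ii)~$\Rightarrow$~(i) from the second identity of Remark \ref{remarks2} exactly as you do. The only cosmetic difference is in (iii)~$\Rightarrow$~(i), where the paper applies (\ref{k2}) directly to $\int_{x}^{\infty}y^{\alpha-1}\overline{F}(y)\,dy\big/(x^{\alpha}\overline{F}(x))\sim 1/(\theta-\alpha)$ rather than citing Lemma \ref{lemma1bis}(iv), which is the same computation.
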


\begin{proof}
Assume that $\theta >\alpha $.

Proof of (i) $\Longrightarrow$ (ii).
Assume that $\overline{F}(x)\in RV_{-\theta }$.
This gives $x^{\alpha-1 }\overline{F}(x)\in RV_{\alpha-1-\theta }$.
Note that, by integration by parts, we have
\begin{equation}\label{eq20210716x}
m(\alpha )-H_{\alpha }(x)=\int_x^{\infty}y^{\alpha}dF(y)=x^{\alpha }\overline{F}(x)+\alpha\int_x^{\infty}y^{\alpha-1}\overline{F}(y)dy\textrm{.}
\end{equation}
Then, dividing (\ref{eq20210716x}) by $x^{\alpha }\overline{F}(x)$ and noting that $\alpha-1-\theta<-1 $,
we have, by applying (\ref{k2}),
$$
\frac{m(\alpha )-H_{\alpha }(x)}{x^{\alpha }\overline{F}(x)}\sim 1-\frac{\alpha}{\alpha-\theta}=\frac{\theta}{\theta-\alpha}\textrm{.}
$$
This relation implies that $m(\alpha )-H_{\alpha }(x)\in RV_{\alpha -\theta }$.
This relation also proves (i) $\Longrightarrow$ (iii).

Proof of (ii) $\Longrightarrow$ (i).
Now, assume $m(\alpha )-H_{\alpha }(x)\in RV_{\alpha -\theta }$.
Hence, $x^{-\alpha-1}(m(\alpha )-H_{\alpha }(x))\in RV_{-\theta-1 }$.
By 
Remark \ref{remarks2}, we have, noting that $-\theta-1<-1$, by applying (\ref{k2}),
$$
\frac{x^{\alpha}\overline{F}(x)}{m(\alpha )-H_{\alpha }(x)}=1-\alpha\frac{\int_x^{\infty}y^{-\alpha-1}(m(\alpha )-H_{\alpha }(y))dy}
{x^{-\alpha}(m(\alpha )-H_{\alpha }(x))}
\sim 1-\frac{\alpha}{\theta}
=\frac{\theta-\alpha}{\theta}
\textrm{.}
$$
This result implies that $x^{\alpha}\overline{F}(x)\in RV_{\alpha -\theta}$.
Hence, we get $\overline{F}(x)\in RV_{-\theta}$.

Proof of (iii) $\Longrightarrow$ (i).
Now, assume that $(m(\alpha )-H_{\alpha }(x))\big/(x^{\alpha }\overline{F}(x))\sim 
\theta\big/(\theta -\alpha )$.
We proved above (\ref{eq20210716x}).
Dividing such a relation by $x^{\alpha }\overline{F}(x)$ and applying the assumption give
%
$$
\frac{\int_x^{\infty}y^{\alpha-1}\overline{F}(y)dy}{x^{\alpha }\overline{F}(x)}\sim\frac{1}{\alpha}\left(\frac{\theta}{\theta -\alpha}-1\right)
=\frac{1}{\theta -\alpha}\textrm{.}
$$
This result and the fact that $-\theta +\alpha-1<-1$ produce, by applying (\ref{k2}), $x^{\alpha-1}\overline{F}(x)\in RV_{-\theta +\alpha-1}$.
This output leads to $\overline{F}(x)\in RV_{-\theta}$.
\end{proof}


\subsubsection{Relations between $H_{\protect\alpha }(x)$ and $\overline{G}%
_{\alpha}(x)$}


\begin{lemma}\label{lemma3}
Let $\alpha>0$ and suppose that $0\leq \theta <\alpha $. We have
$H_{\alpha }(x)\in RV_{\theta
}$ iff $\overline{G}_{\alpha}(x)\in RV_{\theta -\alpha }$ iff $x^{\alpha }\overline{G}_{\alpha}(x)\big/H_{\alpha
}(x)\sim \alpha \big/(\alpha -\theta )$.
%
%
\end{lemma}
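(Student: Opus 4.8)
The plan is to read everything off the single identity of Proposition \ref{propg}(i), which after multiplication by $x^{\alpha}$ becomes
$$
x^{\alpha}\overline{G}_{\alpha}(x)=x^{\alpha}\overline{F}(x)+H_{\alpha}(x),
$$
combined with the already proved links between $H_{\alpha}$ and $\overline{F}$ in Lemma \ref{lemma2} and between $\overline{G}_{\alpha}$ and $\overline{F}$ in Lemma \ref{lemma1}. Throughout, $0\leq\theta<\alpha$, so the constants $\alpha\big/(\alpha-\theta)$ and $(\alpha-\theta)\big/\alpha$ appearing below are strictly positive; this positivity is precisely what lets asymptotic equivalence transfer the property of regular variation.

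First I would establish the second equivalence, $H_{\alpha}(x)\in RV_{\theta}\Longleftrightarrow x^{\alpha}\overline{G}_{\alpha}(x)\big/H_{\alpha}(x)\sim\alpha\big/(\alpha-\theta)$. Dividing the displayed identity by $H_{\alpha}(x)$ gives
$$
\frac{x^{\alpha}\overline{G}_{\alpha}(x)}{H_{\alpha}(x)}=1+\frac{x^{\alpha}\overline{F}(x)}{H_{\alpha}(x)}.
$$
By Lemma \ref{lemma2}(ii), which is valid for the whole range $0\leq\theta<\alpha$, the condition $H_{\alpha}\in RV_{\theta}$ is equivalent to $x^{\alpha}\overline{F}(x)\big/H_{\alpha}(x)\sim\theta\big/(\alpha-\theta)$, and by the last display this is in turn equivalent to $x^{\alpha}\overline{G}_{\alpha}(x)\big/H_{\alpha}(x)\sim 1+\theta\big/(\alpha-\theta)=\alpha\big/(\alpha-\theta)$.

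Next I would prove the two implications making up the first equivalence. For $H_{\alpha}\in RV_{\theta}\Longrightarrow\overline{G}_{\alpha}\in RV_{\theta-\alpha}$, the step just proved yields $x^{\alpha}\overline{G}_{\alpha}(x)\sim\frac{\alpha}{\alpha-\theta}H_{\alpha}(x)$, so $x^{\alpha}\overline{G}_{\alpha}(x)$ is asymptotically equivalent to a positive multiple of the $RV_{\theta}$ function $H_{\alpha}$ and hence itself lies in $RV_{\theta}$, i.e.\ $\overline{G}_{\alpha}\in RV_{\theta-\alpha}$. For the converse $\overline{G}_{\alpha}\in RV_{\theta-\alpha}\Longrightarrow H_{\alpha}\in RV_{\theta}$, I would first note that the hypothesis forces $\overline{F}(x)\big/\overline{G}_{\alpha}(x)\sim\theta\big/\alpha$ in every case: for $\theta\in(0,\alpha)$ this is part of Lemma \ref{lemma1}(i), and for $\theta=0$ it is the last assertion of Lemma \ref{lemma1}(ii) (where the ratio tends to $0=\theta\big/\alpha$). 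Substituting into the identity,
$$
H_{\alpha}(x)=x^{\alpha}\overline{G}_{\alpha}(x)-x^{\alpha}\overline{F}(x)=x^{\alpha}\overline{G}_{\alpha}(x)\Bigl(1-\frac{\overline{F}(x)}{\overline{G}_{\alpha}(x)}\Bigr)\sim\frac{\alpha-\theta}{\alpha}\,x^{\alpha}\overline{G}_{\alpha}(x),
$$
and since $(\alpha-\theta)\big/\alpha>0$ and $x^{\alpha}\overline{G}_{\alpha}(x)\in RV_{\theta}$, we obtain $H_{\alpha}\in RV_{\theta}$. Chaining these two implications with the first equivalence closes the cycle.

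The computations are short; the only point that needs care — and essentially the sole obstacle — is the endpoint $\theta=0$, where Lemma \ref{lemma2}(i) is not available. This is circumvented because both Lemma \ref{lemma2}(ii) and the last assertion of Lemma \ref{lemma1}(ii) remain in force at $\theta=0$, and because keeping the multiplicative constants $\alpha\big/(\alpha-\theta)$ and $(\alpha-\theta)\big/\alpha$ manifestly positive guarantees that the asymptotic relations above carry regular variation in the intended direction.
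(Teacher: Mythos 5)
Your proof is correct, and on two of the three implications it coincides with the paper's: both you and the authors read the equivalence $H_{\alpha}\in RV_{\theta}\Longleftrightarrow x^{\alpha}\overline{G}_{\alpha}(x)\big/H_{\alpha}(x)\sim\alpha\big/(\alpha-\theta)$ off the identity $x^{\alpha}\overline{G}_{\alpha}(x)=x^{\alpha}\overline{F}(x)+H_{\alpha}(x)$ from Proposition \ref{propg}(i) combined with Lemma \ref{lemma2}(ii), and both deduce $H_{\alpha}\in RV_{\theta}\Rightarrow\overline{G}_{\alpha}\in RV_{\theta-\alpha}$ from the resulting asymptotic equivalence to a positive multiple of $H_{\alpha}$. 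Where you genuinely diverge is the converse $\overline{G}_{\alpha}\in RV_{\theta-\alpha}\Rightarrow H_{\alpha}\in RV_{\theta}$: the paper writes $\overline{G}_{\alpha}(x)=\alpha\int_x^{\infty}y^{-\alpha-1}H_{\alpha}(y)\,dy$ and invokes a monotone density argument (citing Lemma \ref{lemma2}(i)) to extract $x^{-\alpha}H_{\alpha}(x)\big/\overline{G}_{\alpha}(x)\sim(\alpha-\theta)\big/\alpha$, whereas you route through Lemma \ref{lemma1}(i) for $0<\theta<\alpha$ and Lemma \ref{lemma1}(ii) for $\theta=0$ to obtain $\overline{F}(x)\big/\overline{G}_{\alpha}(x)\sim\theta\big/\alpha$, and then read the same asymptotic directly off the basic identity. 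Your route buys something: it avoids the monotone density theorem entirely (whose application here is the least transparent step of the paper's proof, since the integrand $y^{-\alpha-1}H_{\alpha}(y)$ is not obviously monotone), it treats the endpoint $\theta=0$ and the interior case uniformly, and it makes explicit that the strict positivity of $(\alpha-\theta)\big/\alpha$ is what lets the asymptotic equivalence transport regular variation. The cost is a heavier dependence on Lemma \ref{lemma1}, but that lemma is established independently of Lemma \ref{lemma3}, so there is no circularity.
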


\begin{proof}
First, suppose that $H_{\alpha}(x)\in RV_{\theta }$, $0\leq \theta <\alpha $. Recall Proposition \ref{propg} (i), i.e.
$x^{\alpha }\overline{G}_{\alpha}(x)=x^{\alpha }\overline{F}(x)+H_{\alpha
}(x)$.
Using Lemma \ref{lemma2} (ii), we find that%
\begin{equation*}
\frac{x^{\alpha }\overline{G}_{\alpha}(x)}{H_{\alpha }(x)}=\frac{x^{\alpha }%
\overline{F}(x)}{H_{\alpha }(x)}+1\sim \frac{\theta }{\alpha -\theta }%
+1=\frac{\alpha }{\alpha -\theta }\textrm{.} 
\end{equation*}
This fact proves that $H_{\alpha}(x)\in RV_{\theta }$ implies $\overline{G}_{\alpha}(x)\in RV_{\theta -\alpha }$.

Now, suppose that $\overline{G}_{\alpha}(x)\in RV_{\theta -\alpha }$. Using (\ref{5}) gives 
\begin{equation*}
\int_{x}^{\infty }y^{-\alpha -1}H_{\alpha }(y)dy=\frac{H_{\alpha
}(x)x^{-\alpha }}{\alpha }+\frac{1}{\alpha }\overline{F}(x)\textrm{.} 
\end{equation*}%
Then, we have, by Proposition \ref{propg} (ii), $\overline{G}_{\alpha}(x)=\alpha \int_{x}^{\infty }y^{-\alpha -1}H_{\alpha}(y)dy$.
A monotone density argument shows that, by using Lemma \ref{lemma2} (i),
\begin{equation*}
\frac{H_{\alpha }(x)x^{-\alpha }}{\overline{G}_{\alpha}(x)}\sim \frac{%
\alpha -\theta }{\alpha } \textrm{.}
\end{equation*}%
We conclude that $\overline{G}_{\alpha}(x)\in RV_{\theta -\alpha }$ implies that $%
H_{\alpha }(x)\in RV_{\theta }$.

Now, assume that $x^{\alpha }\overline{%
G}_{\alpha}(x)\big/H_{\alpha }(x)\sim \alpha \big/(\alpha -\theta )$.
Using again Proposition \ref{propg} (i), i.e. $x^{\alpha }\overline{G}%
_{\alpha}(x)=x^{\alpha }\overline{F}(x)+H_{\alpha }(x)$, we have%
\begin{equation*}
\frac{x^{\alpha }\overline{F}(x)}{H_{\alpha }(x)}\sim \frac{\alpha }{%
\alpha -\theta }-1=\frac{\theta }{\alpha -\theta }\textrm{.} 
\end{equation*}%
Hence, we use Lemma \ref{lemma2} (ii) for concluding tha $H_{\alpha }(x)\in RV_{\theta
}$.
\end{proof}

\subsubsection{Proofs of Theorems \ref{th1}, \ref{th2}, and \ref{th3}}

\begin{proof}[Proof of Theorem \ref{th1}]
If $0<\theta <\alpha $, then:

\begin{itemize}
	\item The relations (a) $\Longleftrightarrow$ (b) $\Longleftrightarrow$ (c) are proved by Lemma \ref{lemma1} (i).
Lemma \ref{lemma1} (i) also proves that (b) $\Longrightarrow$ (d).
The converse of the previous implication is proved by Lemma \ref{lemma1} (iii), taking $\lambda=\theta\big/\alpha$.

\item
The relation (a) $\Longleftrightarrow$ (e) is proved by Lemma \ref{lemma2} (i), whereas
(e) $\Longleftrightarrow$ (f) by Lemma \ref{lemma2} (ii).

\item
The relations (b) $\Longleftrightarrow$ (e) $\Longleftrightarrow$ (g) are proved by Lemma \ref{lemma3}.

\item
The relation (d) $\Longleftrightarrow$ (h) holds since $\overline{G}_{\alpha}(x)=\alpha x^{-\alpha}W_{\alpha}(x)$ because of Proposition \ref{propg} (ii).
%

\end{itemize}

If $\theta=0$, then:

\begin{itemize}
	\item The relations (b) $\Longleftrightarrow$ (c) and (b) $\Longrightarrow$ (d) are proved by Lemma \ref{lemma1} (ii).
The converse of (b) $\Longrightarrow$ (d) is proved by Lemma \ref{lemma1} (iii), taking $\lambda=0$.

\item
The relation (e) $\Longleftrightarrow$ (f) is proved by Lemma \ref{lemma2} (ii).

\item
The relations (b) $\Longleftrightarrow$ (e) $\Longleftrightarrow$ (g) are proved by Lemma \ref{lemma3}.

\item
The relation (d) $\Longleftrightarrow$ (h) holds since $\overline{G}_{\alpha}(x)=\alpha x^{-\alpha}W_{\alpha}(x)$ because of Proposition \ref{propg} (ii).

\end{itemize}

If $\theta=\alpha$, then the relations (a) $\Longleftrightarrow$ (b) $\Longleftrightarrow$ (c) are proved by Lemma \ref{lemma1} (i),
and also (b) $\Longrightarrow$ (d).
\end{proof}

%

\begin{proof}[Proof of Theorem \ref{th2}]
The relations (a) $\Longleftrightarrow$ (b) $\Longleftrightarrow$ (c) $\Longleftrightarrow$ (d) $\Longleftrightarrow$ (e) are proved by Lemma \ref{lemma1bis}.
The relations (a) $\Longleftrightarrow$ (f) $\Longleftrightarrow$ (g) are proved by Lemma \ref{lemma2bis}.
\end{proof}

\begin{proof}[Proof of Theorem \ref{th3}]
The relation (a) $\Longleftrightarrow$ (b) is proved by Lemma \ref{lemma1} (iv).
The relation (b) $\Longleftrightarrow$ (c) is proved by Lemma \ref{lemma2} (iii).
\end{proof}

\subsubsection{Some complements}

We briefly discuss statements about the derivatives $G_{\alpha}^{\prime }(x)$ and $%
G_{\alpha}^{\prime \prime }(x)$. To this end, we consider the inversion formula given in Proposition \ref{Newprop1}(ii), i.e.
\begin{equation}\label{eq20210716Xu}
F(x)=G_{\alpha}(x)+\frac{x}{\alpha }G_{\alpha}^{\prime }(x)\textrm{.} 
\end{equation}%
%

Suppose that $H_{\alpha }(x)\in RV_{\theta }$, $0\leq \theta <\alpha $.
By Theorem \ref{th1} (i) and (ii),
we proved that 
\begin{equation*}
\frac{x^{\alpha }\overline{G}_{\alpha}(x)}{H_{\alpha }(x)}\sim \frac{%
\alpha }{\alpha -\theta }\qquad\textrm{and}\qquad\frac{x^{\alpha }\overline{F}(x)}{H_{\alpha
}(x)}\sim \frac{\theta }{\alpha -\theta } \textrm{,}
\end{equation*}%
so it follows, by using (\ref{eq20210716Xu}), that%
\begin{equation*}
\frac{x^{1+\alpha }G_{\alpha}^{\prime }(x)}{H_{\alpha }(x)}\sim \alpha 
\textrm{.} 
\end{equation*}%
Hence, we have $G_{\alpha}^{\prime }(x)\in RV_{\theta -\alpha -1}$.

If $F$ has a probability density function (p.d.f.) $f$, we also have%
%
\begin{equation*}
x^{2+\alpha }G_{\alpha}^{\prime \prime }(x)=\alpha x^{1+\alpha }f(x)-(\alpha
+1)x^{1+\alpha }G_{\alpha}^{\prime }(x)\textrm{.} 
\end{equation*}
Moreover, if $xf(x)\big/\overline{F}(x)\sim \alpha -\theta $, we have $x^{1+\alpha
}f(x)\sim (\alpha -\theta )x^{\alpha }\overline{F}(x)\sim \theta H_{\alpha
}(x)$, and it follows that%
\begin{equation*}
\frac{x^{2+\alpha }G_{\alpha}^{\prime \prime }(x)}{H_{\alpha }(x)}=\frac{%
x^{1+\alpha }f(x)}{H_{\alpha }(x)}-\frac{\alpha +1}{\alpha }\frac{%
x^{1+\alpha }G_{\alpha}^{\prime }(x)}{H_{\alpha }(x)}\sim \theta -\alpha -1%
\textrm{.} 
\end{equation*}

In the special case that $m(\alpha )<\infty $, we have the following
corollary.

\begin{corollary}
Let $\alpha>0$.
Suppose that $m(\alpha )<\infty $. 
We have $x^{\alpha }\overline{F}%
(x)\sim 0$, $x^{\alpha }\overline{G}_{\alpha}(x)\sim m(\alpha )$
and $x^{1+\alpha }G_{\alpha}^{\prime }(x)\sim \alpha m(\alpha )$. If $F$ has a p.d.f. $f(x)$, and also, for some $\delta\in\mathbb{R}$, $%
x^{1+\delta }f(x)=o(1)$ or $xf(x)=O(\overline{F}(x))$, then $x^{2+\delta
}G_{\alpha}^{\prime \prime }(x)\sim -\alpha (\alpha +1)m(\alpha )$.
\end{corollary}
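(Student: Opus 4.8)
\medskip
\noindent\textbf{Proof proposal.}
The plan is to read the whole statement off the two inversion identities already established: $\overline{G}_{\alpha}(x)=\overline{F}(x)+x^{-\alpha}H_{\alpha}(x)=\alpha x^{-\alpha}W_{\alpha}(x)$ from Proposition~\ref{propg}, and $F(x)=G_{\alpha}(x)+\tfrac{x}{\alpha}G_{\alpha}'(x)$ from Proposition~\ref{Newprop1}(ii). No regular-variation input is needed here; the finiteness hypothesis $m(\alpha)<\infty$ does all the work.

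First I would note that $x^{\alpha}\overline{F}(x)\to 0$: because $m(\alpha)=\int_{[0,\infty)}y^{\alpha}\,dF(y)<\infty$, the tail $\int_{[x,\infty)}y^{\alpha}\,dF(y)$ tends to $0$, and $x^{\alpha}\overline{F}(x)=x^{\alpha}\int_{[x,\infty)}dF(y)\le\int_{[x,\infty)}y^{\alpha}\,dF(y)$. Hence, by Proposition~\ref{propg}(i), $x^{\alpha}\overline{G}_{\alpha}(x)=x^{\alpha}\overline{F}(x)+H_{\alpha}(x)\to 0+m(\alpha)$; equivalently, by Proposition~\ref{propg}(ii), $x^{\alpha}\overline{G}_{\alpha}(x)=\alpha W_{\alpha}(x)\to\alpha W_{\alpha}(\infty)$, and $W_{\alpha}(\infty)=\int_{0}^{\infty}y^{\alpha-1}\overline{F}(y)\,dy=m(\alpha)/\alpha$ by Fubini, giving the same value. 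Rewriting the second inversion identity as $\tfrac{x}{\alpha}G_{\alpha}'(x)=\overline{G}_{\alpha}(x)-\overline{F}(x)$ and multiplying by $\alpha x^{\alpha}$ then gives $x^{1+\alpha}G_{\alpha}'(x)=\alpha\bigl(x^{\alpha}\overline{G}_{\alpha}(x)-x^{\alpha}\overline{F}(x)\bigr)\to\alpha m(\alpha)$, which is the third assertion.

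For the last part I would differentiate $F(x)=G_{\alpha}(x)+\tfrac{x}{\alpha}G_{\alpha}'(x)$ once more (a p.d.f.\ $f$ makes $x^{\alpha}G_{\alpha}(x)=\alpha\int_{0}^{x}t^{\alpha-1}F(t)\,dt$, hence $G_{\alpha}$ and then $G_{\alpha}'$, differentiable), obtaining $f(x)=\tfrac{\alpha+1}{\alpha}G_{\alpha}'(x)+\tfrac{x}{\alpha}G_{\alpha}''(x)$, that is $x^{2+\alpha}G_{\alpha}''(x)=\alpha\,x^{1+\alpha}f(x)-(\alpha+1)\,x^{1+\alpha}G_{\alpha}'(x)$. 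The remaining task is to check that the side condition forces $x^{1+\alpha}f(x)\to0$: under $xf(x)=O(\overline{F}(x))$ this is immediate, since $x^{1+\alpha}f(x)=x^{\alpha}\cdot xf(x)=O\bigl(x^{\alpha}\overline{F}(x)\bigr)=o(1)$ by the first step, and under the polynomial-decay bound $x^{1+\delta}f(x)=o(1)$ it follows by comparing exponents once $\delta$ is taken at least $\alpha$. Feeding $x^{1+\alpha}f(x)\to0$ and $x^{1+\alpha}G_{\alpha}'(x)\to\alpha m(\alpha)$ into the displayed identity yields $x^{2+\alpha}G_{\alpha}''(x)\to-(\alpha+1)\,\alpha m(\alpha)=-\alpha(\alpha+1)m(\alpha)$.

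The one place that genuinely requires care — and the main obstacle — is precisely this last reduction. Finiteness of $m(\alpha)$ alone does \emph{not} give $x^{1+\alpha}f(x)\to0$ (a density with sufficiently thin high spikes has $m(\alpha)<\infty$ while $\limsup_{x}x^{1+\alpha}f(x)>0$), so the side condition is essential; and for the conclusion to hold the normalisation of $G_{\alpha}''$ must be $x^{2+\alpha}$, the one compatible with $x^{1+\alpha}G_{\alpha}'(x)\to\alpha m(\alpha)$ established in the previous step — with any other exponent the $f$-term and the $G_{\alpha}'$-term scale differently and the limit is $0$ or infinite rather than $-\alpha(\alpha+1)m(\alpha)$. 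Everything else is a one-line consequence of the two inversion identities and the limit $x^{\alpha}\overline{F}(x)\to0$.
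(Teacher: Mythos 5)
Your proof is correct and follows essentially the same route as the paper: the same two inversion identities from Propositions \ref{Newprop1} and \ref{propg}, the observation that $m(\alpha)<\infty$ forces $x^{\alpha}\overline{F}(x)\to0$, and the differentiated identity $x^{2+\alpha}G_{\alpha}''(x)=\alpha x^{1+\alpha}f(x)-(\alpha+1)x^{1+\alpha}G_{\alpha}'(x)$. You are also right to flag the side condition: $x^{1+\delta}f(x)=o(1)$ for an arbitrary $\delta\in\mathbb{R}$ does not by itself give $x^{1+\alpha}f(x)\to0$ (one needs $\delta\geq\alpha$), and the exponent in the conclusion should read $2+\alpha$ rather than $2+\delta$ --- this is a defect of the corollary as stated, not a gap in your argument.
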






\end{document}